\theoremstyle{plain}
\numberwithin{equation}{section}
\theoremstyle{plain}
\newtheorem{theorem}{Theorem}[section]
\newtheorem{lemma}[theorem]{Lemma}
\newtheorem{proposition}[theorem]{Proposition}
\theoremstyle{definition}
\newtheorem{example}[theorem]{Example}
\newcommand{\ands}{\quad\mbox{and}\quad}
\newcommand{\kernel}{{\mathrm{Ker}}}
\newcommand{\degr}{{\mathrm{deg}}}
\newcommand{\Dom}{{\mathrm{Dom}}}
\newcommand{\Index}{{\mathrm{Index}}}
\newcommand{\Rat}{{\mathrm{Rat}}}
\newcommand{\Ran}{{\mathrm{Ran}}}
\newcommand{\im}{{\mathrm{Im}}}
\newcommand{\re}{{\mathrm{Re}}}
\newcommand{\half}{\mbox{$\frac{1}{2}$}}
\newcommand{\tu}[1]{\textup{#1}}
\newcommand{\cP}{{\mathcal P}}
\newcommand{\cQ}{{\mathcal Q}}
\newcommand{\BC}{{\mathbb C}}
\newcommand{\BN}{{\mathbb N}}
\newcommand{\BT}{{\mathbb T}}
\newcommand{\BD}{{\mathbb D}}
\newcommand{\BP}{{\mathbb P}}
\newcommand{\BR}{{\mathbb R}}
\newcommand{\wtil}[1]{{\widetilde{#1}}}
\newcommand{\what}[1]{{\widehat{#1}}}
\newcommand{\al}{\alpha}
\newcommand{\la}{\lambda}
\newcommand{\si}{\sigma}
\newcommand{\om}{\omega}
\begin{document}

\title[Unbounded Toeplitz-like operators II: the spectrum]{A Toeplitz-like operator with rational symbol having poles on the unit circle II: the spectrum}

\author[G.J. Groenewald]{G.J. Groenewald}
\address{G.J. Groenewald, Department of Mathematics, Unit for BMI, North-West
University,
Potchefstroom, 2531 South Africa}
\email{Gilbert.Groenewald@nwu.ac.za}

\author[S. ter Horst]{S. ter Horst}
\address{S. ter Horst, Department of Mathematics, Unit for BMI, North-West
University, Potchefstroom, 2531 South Africa}
\email{Sanne.TerHorst@nwu.ac.za}

\author[J. Jaftha]{J. Jaftha}
\address{J. Jaftha, Numeracy Centre, University of Cape Town, Rondebosch 7701; Cape Town; South Africa}
\email{Jacob.Jaftha@uct.ac.za}

\author[A.C.M. Ran]{A.C.M. Ran}
\address{A.C.M. Ran, Department of Mathematics, Faculty of Science, VU Amsterdam, De Boelelaan 1081a, 1081 HV Amsterdam, The Netherlands and Unit for BMI, North-West~University, Potchefstroom, South Africa}
\email{a.c.m.ran@vu.nl}

\thanks{This work is based on the research supported in part by the National Research Foundation of South Africa (Grant Number 90670 and 93406).\\
Part of the research was done during a sabbatical of the third author, in which time several
research visits to VU Amsterdam and North-West University were made. Support from University
of Cape Town and the Department of Mathematics, VU Amsterdam is gratefully acknowledged.}

\subjclass{Primary 47B35, 47A53; Secondary 47A68}

\keywords{unbounded Toeplitz operator, spectrum, essential spectrum}

\begin{abstract}
This paper is a continuation of our study of a class of Toeplitz-like operators with a rational symbol which has a pole on the unit circle. A description of the spectrum and its various parts, i.e., point, residual and continuous spectrum, is given, as well as a description of the essential spectrum. In this case, the essential spectrum need not be connected in $\BC$. Various examples illustrate the results.
\end{abstract}

\subjclass[2010]{Primary 47B35, 47A53; Secondary 47A68}

\keywords{unbounded Toeplitz operator, spectrum, essential spectrum}

\maketitle

%%%%%%%%%%%%%%%%%%%%%%%%%%%%%%%%%%%%%%%%%%%%%%%%%%%%%%%%%%%%%%%%%%%%%%%%%%%%%%%%%%%%%%%%%%%%%%%% Add material
%%%%%%%%%%%%%%%%%%%%%%%%%%%%%%%%%%%%%%%%%%%%%%%%%%%%%%%%%%%%%%%%%%%%%%%%%%%%%%%%%%%%%%%%%%%%%%%%

\section{Introduction}

This paper is a continuation of our earlier paper \cite{GtHJR1} where Toeplitz-like operators with rational symbols which may have poles on the unit circle where introduced. While the aim of \cite{GtHJR1} was to determine the Fredholm properties of such Toeplitz-like operators, in the current paper we will focus on properties of the spectrum. For this purpose we further analyse this class of Toeplitz-like operators, specifically in the case where the operators are not Fredholm.

We start by recalling the definition of our Toeplitz-like operators.
Let $\Rat$ denote the space of rational complex functions. Write $\Rat(\mathbb{T})$ and $\Rat_0(\mathbb{T})$ for the subspaces of $\Rat$ consisting of the rational functions in $\Rat$ with all poles on $\BT$ and the strictly proper rational functions in $\Rat$ with all poles on the unit circle $\BT$, respectively. For $\om \in \Rat$, possibly having poles on $\BT$, we define a Toeplitz-like operator $T_\omega (H^p \rightarrow H^p)$, for $1 < p<\infty$, as follows:
\begin{equation}\label{Toeplitz}
\Dom(T_\omega)\!=\! \left\{ g\in H^p \! \mid \! \omega g = f + \rho \mbox{ with } f\!\in\! L^p\!\!,\, \rho \!\in\!\textup{Rat}_0(\mathbb{T})\right\},\
T_\omega g = \mathbb{P}f.
\end{equation}
Here $\BP$ is the Riesz projection of $L^p$ onto $H^p$.

In \cite{GtHJR1} it was established that this operator is a densely defined, closed operator which is Fredholm if and only if $\om$ has no zeroes on $\BT$. In case the symbol $\om$ of $T_\om$ is in $\Rat(\mathbb{T})$  with no zeroes on $\BT$, i.e., $T_\om$ Fredholm, explicit formulas for the domain, kernel, range and a complement of the range were also obtained in \cite{GtHJR1}. Here we extend these results to the case that $\om$ is allowed to have zeroes on $\BT$, cf., Theorem \ref{T:Rat(T)} below. By a reduction to the case of symbols in $\Rat(\mathbb{T})$, we then obtain for general symbols in $\Rat$, in Proposition \ref{P:injectdenserange} below, necessary and sufficient conditions for $T_\om$ to be injective or have dense range, respectively.

\paragraph{\bf Main results}

%\subsection{Main results}

Using the fact that $\la I_{H^p}-T_{\om}=T_{\la-\om}$, our extended analysis of the operator $T_{\om}$ enables us to describe the spectrum of $T_\om$, and its various parts. Our first main result is a description of the essential spectrum of $T_\om$, i.e., the set of all $\la\in\BC$ for which $\la I_{H^p}-T_{\om}$ is not Fredholm.

\begin{theorem}\label{T:main1}
Let $\om\in\Rat$. Then the essential spectrum $\si_\textup{ess}(T_{\om})$ of $T_{\om}$ is an algebraic curve in $\BC$ which is given by
\[
\si_\textup{ess}(T_{\om})=\om(\BT):=\{\om(e^{i\theta}) \mid 0\leq \theta \leq 2\pi,\, \mbox{$e^{i\theta}$ not a pole of $\om$} \}.
\]
Furthermore, the map $\la\mapsto \Index (T_{\la-\om})$ is constant on connected components of $\BC\backslash \om(\BT)$ and the intersection of the point spectrum, residual spectrum and resolvent set of $T_\om$ with $\BC\backslash \om(\BT)$ coincides with sets of $\la\in\BC\backslash \om(\BT)$ with $\Index (T_{\la-\om})$ being strictly positive, strictly negative and zero, respectively.
\end{theorem}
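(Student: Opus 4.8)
The plan is to prove all four assertions by working throughout with the identity $\la I_{H^p}-T_\om=T_{\la-\om}$, so that every statement about $\la I_{H^p}-T_\om$ becomes a statement about a Toeplitz-like operator whose symbol $\la-\om$ again lies in $\Rat$. First I would settle the essential spectrum. Since $\la$ is constant, $\la-\om$ has exactly the same poles on $\BT$ as $\om$, while its zeros on $\BT$ are precisely the points $e^{i\te}$ (not poles of $\om$) with $\om(e^{i\te})=\la$. By the Fredholm criterion of \cite{GtHJR1}, $T_{\la-\om}$ is Fredholm iff $\la-\om$ has no zeros on $\BT$, which holds iff $\la\notin\om(\BT)$; this gives $\si_\textup{ess}(T_\om)=\om(\BT)$ at once. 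To see that $\om(\BT)$ is an algebraic curve I would write $w=\om(z)$ and, using $\bar z=1/z$ on $\BT$, also $\bar w=\wtil{\om}(1/z)$, where $\wtil{\om}$ is obtained from $\om$ by conjugating its coefficients. Clearing denominators turns these into two polynomial relations in $z$ with coefficients depending on $w$ and $\bar w$; eliminating $z$ by a resultant yields a single polynomial equation in $w,\bar w$, i.e.\ in $\re w$ and $\im w$, satisfied by every point of $\om(\BT)$.

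Next I would prove that $\la\mapsto\Index(T_{\la-\om})$ is constant on connected components. The structural point is that $\Dom(T_{\la-\om})=\Dom(T_\om)$ for every $\la$, because $\la I_{H^p}$ is bounded; thus $\{\la I_{H^p}-T_\om\}_{\la\in\BC}$ is an affine, hence analytic, family of closed operators sharing one fixed domain, each of which is Fredholm on $\BC\setminus\om(\BT)$. Since $(\la'-\la)I_{H^p}$ has norm $|\la'-\la|$, the stability of the Fredholm index under bounded perturbations of sufficiently small norm shows that $\la\mapsto\Index(T_{\la-\om})$ is locally constant on $\BC\setminus\om(\BT)$, and therefore constant on each connected component.

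The trichotomy is the heart of the matter, and the decisive input is a Coburn-type alternative: for $\la\notin\om(\BT)$ the operator $T_{\la-\om}$ is injective or has dense range. I would derive this from Proposition \ref{P:injectdenserange} by verifying that, for a symbol free of zeros on $\BT$, the criterion there for non-injectivity and the criterion for non-dense range are mutually exclusive. Granting this, a Fredholm operator has closed range, so dense range is equivalent to surjectivity, whence at least one of the nullity $\dim\kernel T_{\la-\om}$ and the deficiency $\codim\Ran T_{\la-\om}$ vanishes. Reading this against $\Index(T_{\la-\om})=\dim\kernel T_{\la-\om}-\codim\Ran T_{\la-\om}$ then forces the conclusion: if the index is positive the kernel is nontrivial and the range is all of $H^p$, so $\la$ is in the point spectrum; if the index is negative the operator is injective with non-dense range, so $\la$ is in the residual spectrum; and if the index is zero the operator is bijective, so by the closed graph theorem $\la$ lies in the resolvent set. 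In particular the argument shows there is no continuous spectrum in $\BC\setminus\om(\BT)$, so the three cases are exhaustive.

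I expect the Coburn-type alternative to be the main obstacle; the remaining ingredients are the Fredholm criterion of \cite{GtHJR1}, a standard index-stability theorem, and bookkeeping with $\Index=\dim\kernel-\codim\Ran$. Confirming that non-injectivity and non-dense range cannot occur simultaneously requires a careful reading of the explicit kernel and range descriptions underlying Proposition \ref{P:injectdenserange}, in particular tracking how the zeros of $\la-\om$ inside and outside $\BD$, together with its poles on $\BT$, contribute separately to the kernel and to the cokernel, and checking that in the regime where $\la-\om$ has no zeros on $\BT$ these contributions never overlap.
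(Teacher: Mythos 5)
Your proposal is correct and follows essentially the same route as the paper: reduce to $T_{\la-\om}$, apply the Fredholm criterion of \cite{GtHJR1} to get $\si_{\tu{ess}}(T_\om)=\om(\BT)$, use stability of the Fredholm index under small bounded perturbations (the paper cites \cite[Theorem VII.5.2]{S71}, which is exactly the result you invoke, supplemented by continuity of the roots of $\la q-s$), and settle the trichotomy via the injective-or-surjective alternative for the Fredholm operator $T_{\la-\om}$, which the paper quotes directly from Theorem \ref{T:recall1} rather than re-deriving it from Proposition \ref{P:injectdenserange} as you propose. The only cosmetic differences are your resultant-based elimination for the algebraic-curve claim (the paper instead writes out the defining polynomial equations in real coordinates) and your verification that the injectivity and dense-range criteria cannot fail simultaneously, which is immediate since that would force $\sharp\{\mbox{zeroes in }\BD\}>\sharp\{\mbox{poles in }\overline{\BD}\}>\sharp\{\mbox{zeroes in }\overline{\BD}\}$, and which is in any case already recorded as the last sentence of Proposition \ref{P:injectdenserange}.
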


Various examples, specifically in Section \ref{S:ExEssSpec}, show that the algebraic curve $\om(\BT)$, and thus the essential spectrum of $T_\om$, need not be connected in $\BC$.

Our second main result provides a description of the spectrum of $T_{\om}$ and its various parts. Here and throughout the paper $\cP$ stands for the subspace of $H^p$ consisting of all polynomials and $\cP_k$ for the subspace of $\cP$ consisting of all polynomials of degree at most $k$.

\begin{theorem}\label{T:main2}
Let $\om\in\Rat$, say $\om=s/q$ with $s,q\in\cP$ co-prime. Define
\begin{equation}\label{Kq0-}
\begin{aligned}
k_q&=\sharp\{\mbox{roots of $q$ inside $\overline{\BD}$}\}=\sharp\{\mbox{poles of $\la-\om$ inside $\overline{\BD}$}\},\\
k_\la^-&=\sharp\{\mbox{roots of $\la q-s$ inside $\BD$}\}=\sharp\{\mbox{zeroes of $\la-\om$ inside $\BD$}\},\\
k_\la^0&=\sharp\{\mbox{roots of $\la q-s$ on $\BT$}\}=\sharp\{\mbox{zeroes of $\la-\om$ on $\BT$}\},
\end{aligned}
\end{equation}
where in all these sets multiplicities of the roots, poles and zeroes are to be taken into account. Then the resolvent set $\rho(T_\om)$, point spectrum $\si_\textup{p}(T_\om)$, residual spectrum $\si_\textup{r}(T_\om)$ and continuous spectrum $\si_\textup{c}(T_\om)$ of $T_\om$ are given by
\begin{equation}\label{specparts}
\begin{aligned}
\rho(T_{\om})&=\{\la\in\BC \mid k_\la^0=0 \mbox{ and } k_q=k_\la^-\},\\
\si_\textup{p}(T_{\om})=\{\la\in\BC &\mid  k_q>k_\la^-+k_\la^0\},\quad
\si_\textup{r}(T_{\om})=\{\la\in\BC \mid k_q<k_\la^-\},\\
\si_\textup{c}(T_{\om})&=\{\la\in\BC \mid k_\la^0>0 \mbox{ and } k_\la^- \leq k_q\leq k_\la^- + k_\la^0\}.
\end{aligned}
\end{equation}
Furthermore, $\si_\textup{ess}(T_\om)=\om(\BT)=\{\la\in\BC \mid k_\la^0>0\}$.
\end{theorem}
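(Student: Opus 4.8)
The backbone of the argument is the identity $\la I_{H^p}-T_\om=T_{\la-\om}$, which reduces the computation of the spectrum of $T_\om$ to deciding, for each $\la\in\BC$, whether the operator $T_{\la-\om}$ is boundedly invertible, non-injective, injective with non-dense range, or injective with dense but proper range; these four mutually exclusive possibilities correspond exactly to $\la$ lying in $\rho(T_\om)$, $\si_\textup{p}(T_\om)$, $\si_\textup{r}(T_\om)$ and $\si_\textup{c}(T_\om)$. Since $\om=s/q$ with $s,q$ co-prime, the symbol of $T_{\la-\om}$ is $\la-\om=(\la q-s)/q$, and $\la q-s$ and $q$ are automatically co-prime: a common root $z_0$ would force $s(z_0)=\la q(z_0)=0$, contradicting co-primality of $s,q$. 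Hence $k_q$, $k_\la^-$ and $k_\la^0$ are precisely the numbers of poles of $\la-\om$ in $\overline{\BD}$ and of zeroes of $\la-\om$ in $\BD$ and on $\BT$, which is exactly the input required by Theorem \ref{T:Rat(T)} and Proposition \ref{P:injectdenserange}. The essential-spectrum statement is then immediate from Theorem \ref{T:main1}: $\om(\BT)$ is by definition the set of $\la$ for which $\la-\om$ vanishes somewhere on $\BT$, so $\si_\textup{ess}(T_\om)=\om(\BT)=\{\la\in\BC\mid k_\la^0>0\}$, and off this curve $T_{\la-\om}$ is Fredholm.

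The core of the proof is to record three criteria for $T_{\la-\om}$. Proposition \ref{P:injectdenserange}, applied to the symbol $\la-\om$, gives that $T_{\la-\om}$ is injective if and only if $k_q\le k_\la^-+k_\la^0$, and that it has dense range if and only if $k_q\ge k_\la^-$. For surjectivity, when $k_\la^0=0$ the operator is Fredholm, hence has closed range, so it is surjective exactly when its range is dense, i.e.\ when $k_q\ge k_\la^-$; when $k_\la^0>0$ it is not Fredholm and its range, though it may be dense, is never closed, so it is never surjective. Hence $T_{\la-\om}$ is surjective if and only if $k_\la^0=0$ and $k_q\ge k_\la^-$.

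Combining the three criteria produces all four spectral parts at once. Non-injectivity is $k_q>k_\la^-+k_\la^0$, giving $\si_\textup{p}(T_\om)$. Injectivity together with non-dense range forces $k_q<k_\la^-$ (which automatically satisfies the injectivity bound), giving $\si_\textup{r}(T_\om)$. Bounded invertibility needs injectivity together with surjectivity, i.e.\ $k_\la^0=0$ and $k_\la^-\le k_q\le k_\la^-$, that is $k_\la^0=0$ and $k_q=k_\la^-$, giving $\rho(T_\om)$. The remaining injective, dense-range, non-surjective case is $k_\la^-\le k_q\le k_\la^-+k_\la^0$ with $k_\la^0>0$, giving $\si_\textup{c}(T_\om)$. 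A routine check shows these four sets are pairwise disjoint and cover $\BC$. As a consistency check, off the curve ($k_\la^0=0$) the explicit kernel and range descriptions give $\Index T_{\la-\om}=k_q-k_\la^-$, and the trichotomy point/resolvent/residual according to the sign of this index reproduces exactly the index statement of Theorem \ref{T:main1}.

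The main obstacle is the non-Fredholm regime $k_\la^0>0$. Two points need genuine work there: establishing the exact kernel dimension $\dim\kernel T_{\la-\om}=\max\{0,\,k_q-k_\la^--k_\la^0\}$, in which each zero on $\BT$ reduces the kernel just as a zero inside $\BD$ does, and showing that throughout the band $k_\la^-\le k_q\le k_\la^-+k_\la^0$ the range is dense but not closed, so that these $\la$ truly belong to the continuous spectrum rather than to $\rho(T_\om)$ or $\si_\textup{r}(T_\om)$. Both facts rest on the fine description of $\Dom(T_{\la-\om})$, $\kernel T_{\la-\om}$ and $\Ran T_{\la-\om}$ supplied by Theorem \ref{T:Rat(T)}, which is precisely the extension beyond the Fredholm setting of \cite{GtHJR1} that this analysis requires.
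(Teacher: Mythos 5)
Your proposal is correct and follows essentially the same route as the paper: reduce to $T_{\la-\om}$ via $\la I-T_\om=T_{\la-\om}$, note the co-primeness of $\la q-s$ and $q$, and combine the injectivity/dense-range criteria of Proposition \ref{P:injectdenserange} with the Fredholm information from Theorems \ref{T:recall1} and \ref{T:main1}. The only real difference is cosmetic: you rule out surjectivity in the non-Fredholm regime by asserting the range is never closed (true, but left unproved), whereas the paper sidesteps this by using only that the resolvent set lies in the complement of the essential spectrum, i.e.\ that bounded invertibility implies Fredholmness, which is all the argument needs.
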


Again, in subsequent sections various examples are given that illustrate these results. In particular, examples are given where $T_{\om}$ has a bounded resolvent set, even with an empty resolvent set. This is in sharp contrast to the case where $\om$ has no poles on the unit circle $\BT$. For in this case the operator is bounded, the resolvent set is a nonempty unbounded set and the spectrum a compact set, and the essential spectrum is connected.

Both Theorems \ref{T:main1} and \ref{T:main2} are proven in Section \ref{S:Spectrum}.

\paragraph{\bf Discussion of the literature}

In the case of a bounded selfadjoint Toeplitz operator on $\ell^2$, Hartman and Wintner in \cite{HW50} showed that the point spectrum is empty when the symbol is real and rational and posed the problem of specifying the spectral properties of such a Toeplitz operator. Gohberg in \cite{G52}, and more explicitly in \cite{G67}, showed that a bounded Toeplitz operator with continuous symbol is Fredholm exactly when the symbol has no zeroes on $\BT$, and in this case the index of the operator coincides with the negative of the winding number of the symbol with respect to zero. This implies immediately that the essential spectrum of a Toeplitz operator with continuous symbol is the image of the unit circle.

Hartman and Wintner in \cite{HW54} followed up their earlier question by showing that in the case where the symbol, $\varphi$, is a bounded real valued function on $\BT$, the spectrum of the Toeplitz operator on $H^2$ is contained in the interval bounded by the essential lower and upper bounds of $\varphi$ on $\BT$ as well as that the point spectrum is empty whenever $\varphi$ is not a constant. Halmos, after posing in \cite{H63} the question whether the spectrum of a Toeplitz operator is connected, with Brown in \cite{BH64} showed that the spectrum cannot consist of only two points. Widom, in \cite{W64}, established that bounded Toeplitz operators on $H^2$ have connected spectrum, and later extended the result for general $H^p$, with $1 \leq p \leq \infty$. That the essential (Fredholm) spectrum of a bounded Toeplitz operator in $H^2$ is connected was shown by Douglas in \cite{D98}. For the case of bounded Toeplitz operators in $H^p$ it is
posed as an open question in B\"ottcher and Silbermann in \cite[Page 70]{BS06} whether the essential (Fredholm) spectrum of a Toeplitz operator in $H^p$ is necessarily connected.
 Clark, in \cite{C67}, established conditions on the argument of the symbol $\varphi$ in the case $\varphi\in L^q, q \geq 2$ that would give the kernel index of the Toeplitz operator with symbol
 $\varphi$ on $L^p$, where $\frac{1}{p} + \frac{1}{q} = 1$, to be $m\in\BN$.

Janas, in \cite{J91},
discussed unbounded Toeplitz operators on the Bargmann-Siegel space and
showed that $\sigma_\tu{ess}(T_\varphi) \subset \cap_{R>0} \textrm{ closure } \{\varphi (z): \vert z\vert\geq R\}$.

\paragraph{\bf Overview} The paper is organized as follows. Besides the current introduction, the paper consists of five sections. In Section \ref{S:Review} we extend a few results concerning the operator $T_\om$ from \cite{GtHJR1} to the case where $T_\om$ need not be Fredholm. These results are used in Section \ref{S:Spectrum} to compute the spectrum of $T_{\om}$ and various of its subparts, and by doing so we prove the main results, Theorems \ref{T:main1} and \ref{T:main2}. The remaining three sections contain examples that illustrate our main results and show in addition that the resolvent set can be bounded, even empty, and that the essential spectrum can be disconnected in $\BC$.

\paragraph{\bf Figures}
We conclude this introduction with a remark on the figures in this paper illustrating the spectrum and essential spectrum for several examples. The color coding in these figures is as follows: the white region is the resolvent set, the black curve is the essential spectrum, and the colors in the other regions codify the Fredholm index, where red indicates index $2$, blue indicates index $1$, cyan indicates index $-1$, magenta indicates index $-2$.

\section{Review and new results concerning $T_\omega$}\label{S:Review}

In this section we recall some results concerning the operator $T_\om$ defined in \eqref{Toeplitz} that were obtained in \cite{GtHJR1} and will be used in the present paper to determine spectral properties of $T_\om$. A few new features are added as well, specifically relating to the case where $T_\om$ is not Fredholm.

The first result provides necessary and sufficient conditions for $T_{\om}$ to be Fredholm, and gives a formula for the index of $T_\om$ in case $T_\om$ is Fredholm.

\begin{theorem}[Theorems 1.1 and 5.4 in \cite{GtHJR1}]\label{T:recall1}
Let $\om\in \Rat$. Then $T_\om$ is Fredholm if and only if $\om$ has no zeroes on $\BT$. In case $T_\om$ is Fredholm, the Fredholm index of $T_\om$ is given by
\[
\Index (T_\om) = \sharp \left\{\begin{array}{l}\!\!\!
 \textrm{poles of } \om \textrm{ in }\overline{\BD} \textrm{ multi.}\!\!\! \\
\!\!\!\textrm{taken into account}\!\!\!
\end{array}\right\}  -
\sharp \left\{\begin{array}{l}\!\!\! \textrm{zeroes of } \om\textrm{ in }\BD  \textrm{ multi.}\!\!\! \\
\!\!\!\textrm{taken into account}\!\!\!
\end{array}\right\},
\]
and  $T_\om$ is either injective or surjective. In particular, $T_\om$ is injective, invertible or surjective if and only if $\Index(T_\om)\leq 0$, $\Index(T_\om)=0$ or $\Index(T_\om)\geq 0$, respectively.
\end{theorem}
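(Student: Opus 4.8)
The plan is to prove the Fredholm criterion and index formula by a factorization argument, and to deduce the injectivity/surjectivity dichotomy from a Coburn-type lemma. Write $\om=s/q$ with $s,q\in\cP$ coprime and split each polynomial according to the location of its roots, $s=s_-s_0s_+$ and $q=q_-q_0q_+$, where the factors with subscript $-$, $0$, $+$ collect the roots in $\BD$, on $\BT$ and outside $\overline{\BD}$, respectively. First I would strip off the part of the symbol that is analytic and zero-free on $\overline{\BD}$: the rational function $s_+/q_+$ lies in $H^\infty$ together with its inverse $q_+/s_+$, so the analytic Toeplitz operator $T_{s_+/q_+}$ is boundedly invertible on $H^p$, and since the pulled-out factor is in $H^\infty$ one has the operator identity $T_\om=T_{\om q_+/s_+}\,T_{s_+/q_+}$. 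This reduces the computation of Fredholmness and index to the symbol $\om_1=s_-s_0/(q_-q_0)$, whose poles and zeroes all lie in $\overline{\BD}$ and which carries exactly the same poles in $\overline{\BD}$ and zeroes in $\BD$ as $\om$.

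Next I would show that $T_\om$ is Fredholm precisely when $s_0$ is constant, i.e.\ when $\om$ has no zeroes on $\BT$, and simultaneously compute the index. For this I would factor $\om_1$ further into elementary symbols $(z-\be)$ with $|\be|\le 1$ (the zeroes) and $1/(z-\ga)$ with $|\ga|\le 1$ (the poles), and use that the product of two Toeplitz operators differs from the Toeplitz operator of the product only by a Hankel-type term, which for rational symbols is of finite rank. Since finite-rank perturbations alter neither Fredholmness nor index, the index of $T_{\om_1}$ is the sum of the indices of these elementary operators. Each elementary factor is then handled by direct computation of kernel and cokernel from the defining relation: writing $\om g=f+\rho$ with $f\in L^p$ and $\rho\in\Rat_0(\BT)$ and clearing denominators gives a polynomial identity of the form $sg=qf+P$ with $\deg P<\deg q$, so that the requirements $g\in H^p$ and $\BP f$ being the output translate into explicit analyticity conditions on $g$. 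A zero of $\om$ inside $\BD$ contributes $-1$ to the index, a pole in $\overline{\BD}$—whether inside $\BD$ or, crucially, on $\BT$—contributes $+1$, and a genuine zero on $\BT$ makes the relevant range non-closed, destroying Fredholmness. Summing yields $\Index(T_\om)=\#\{\text{poles in }\overline{\BD}\}-\#\{\text{zeroes in }\BD\}$.

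For the dichotomy I would establish a Coburn-type lemma: if $\om\neq 0$, then at least one of $\kernel T_\om$ and the cokernel of $T_\om$ is trivial. The explicit kernel/cokernel descriptions from the previous step already exhibit this, since in the reduced form $T_{\om_1}=T_{1/(q_-q_0)}\,T_{s_-}$ the analytic factor $T_{s_-}$ is injective while the factor $T_{1/(q_-q_0)}$ has dense range, so $\kernel T_\om$ and $\Ran T_\om$ cannot both be proper. Combining the dichotomy with the index formula then yields all remaining assertions: a Fredholm operator that is injective has index $\le 0$ and one that is surjective has index $\ge 0$, whence $\Index(T_\om)<0$ forces injectivity, $\Index(T_\om)>0$ forces surjectivity, and $\Index(T_\om)=0$ forces invertibility, and conversely each of these index inequalities is implied by the corresponding mapping property.

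The main obstacle is that $\om$ may have poles on $\BT$, so $T_\om$ is genuinely unbounded and the singular correction term $\rho\in\Rat_0(\BT)$ built into $\Dom(T_\om)$ must be tracked throughout. In particular, the standard bounded-symbol tools—the finite-rank Hankel correction, Coburn's lemma, and the invertibility of analytic Toeplitz operators—must be re-derived in a form valid on $\Dom(T_\om)$, and one must verify that, for the index count, a pole on $\BT$ behaves exactly like a pole inside $\BD$ rather than like a zero on $\BT$. Establishing closedness and dense definedness of $T_\om$, together with the finite-rank nature of the correction terms on the correct domain, is where the real work lies.
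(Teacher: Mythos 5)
A preliminary remark: this theorem is not proved in the present paper at all --- it is imported verbatim from Part I of the series (Theorems 1.1 and 5.4 of \cite{GtHJR1}), so there is no in-paper proof to compare against. That said, your outline does track the strategy that Part I uses and that this paper recalls in the proof of Proposition \ref{P:injectdenserange}: factor $\om=\om_- z^\kappa\om_0\om_+$, peel off the boundedly invertible operators $T_{\om_\pm}$, and reduce the Fredholm and index questions to a symbol whose poles and zeroes all lie in $\overline{\BD}$. Your bookkeeping of the elementary contributions (a pole in $\overline{\BD}$, boundary or not, gives $+1$; a zero in $\BD$ gives $-1$; a zero on $\BT$ destroys closedness of the range) is consistent with the stated formula.

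There is, however, one step that is genuinely wrong as argued, and several that are only flagged rather than carried out. The wrong step is your justification of the injective-or-surjective dichotomy: from a factorization $T_{\om_1}=AB$ with $B$ injective and $A$ of dense range one cannot conclude that $AB$ is injective or has dense range, since $\kernel(AB)=B^{-1}(\kernel A\cap\Ran B)$ and $\Ran(AB)=A(\Ran B)$ can simultaneously be nontrivial and non-dense. For instance, on $H^p\oplus H^p$ take $A=T_{z^{-2}}\oplus T_{z^{-1}}$ (surjective) and $B=T_{z}\oplus T_{z^{2}}$ (injective); then $AB=T_{z^{-1}}\oplus T_{z}$ has one-dimensional kernel and closed, proper range. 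The dichotomy in fact comes from the explicit kernel and range formulas (as in Theorem \ref{T:Rat(T)}): injectivity holds iff $\sharp\{\mbox{poles in }\overline{\BD}\}\leq\sharp\{\mbox{zeroes in }\overline{\BD}\}$ and dense range holds iff $\sharp\{\mbox{poles in }\overline{\BD}\}\geq\sharp\{\mbox{zeroes in }\BD\}$, and these two conditions cannot fail simultaneously; surjectivity then follows from dense range plus the closed range of a Fredholm operator. Beyond this, the items you defer to ``where the real work lies'' are precisely the substance of the cited theorems: the multiplicativity of the $T$'s and the domain bookkeeping for symbols with poles on $\BT$ (where the classical bounded Hankel correction is not available), the fact that a product containing a non-Fredholm elementary factor is itself non-Fredholm (false for general operator products without an argument isolating that factor between operators with one-sided inverses), and the verification that a boundary pole contributes $+1$ rather than obstructing Fredholmness. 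As it stands the proposal is a reasonable road map whose hardest segments are unbuilt and whose dichotomy argument needs to be replaced.
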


Special attention is given in \cite{GtHJR1} to the case where $\om$ is in $\Rat(\BT)$, since in that case the kernel, domain and range can be computed explicitly; for the domain and range this was done under the assumption that $T_\om$ is Fredholm. In the following result we collect various statements from Proposition 4.5 and Theorems 1.2 and 4.7 in \cite{GtHJR1} and extend to or improve some of the claims regarding the case that $T_\om$ is not Fredholm.

\begin{theorem}\label{T:Rat(T)}
Let $\om\in \Rat(\BT)$, say $\om=s/q$ with $s,q\in\cP$ co-prime. Factor $s=s_-s_0s_+$ with $s_-$, $s_0$ and $s_+$ having roots only inside, on, or outside $\BT$. Then
\begin{equation}\label{DomRanId}
\begin{aligned}
&\qquad \kernel (T_\omega) = \left\{r_0/s_+ \mid \deg(r_0) < \deg(q) - \deg(s_-s_0) \right\};\\
&\Dom(T_\om)=qH^p+\cP_{\deg(q)-1}; \quad
\Ran(T_\om)=s H^p+\wtil\cP,
\end{aligned}
\end{equation}
where $\wtil\cP$ is the subspace of $\cP$ given by
\begin{equation}\label{tilP}
\wtil\cP = \{ r\in\cP \mid r q = r_1 s + r_2 \mbox{ for } r_1,r_2\in\mathcal{P}_{\deg(q)-1}\}\subset \cP_{\deg(s)-1}.
\end{equation}
Furthermore, $H^p=\overline{\Ran(T_\om)} + \wtil{\cQ}$ forms a direct sum decomposition of $H^p$, where
\begin{equation}\label{tilQ}
\wtil\cQ=\cP_{k-1}\quad \mbox{with}\quad k=\max\{\deg(s_-)-\deg(q) , 0\},
\end{equation}
following the convention $\cP_{-1}:=\{0\}$.
\end{theorem}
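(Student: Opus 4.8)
The plan is to start from the range description $\Ran(T_\om)=sH^p+\wtil\cP$ already recorded in the theorem, compute its closure, and then exhibit $\wtil\cQ=\cP_{k-1}$ as a complement. First I would determine $\overline{\Ran(T_\om)}$ from the factorization $s=s_-s_0s_+$. Since $s_+$ has all its roots outside $\overline{\BD}$, the function $1/s_+$ lies in $H^\infty$ and multiplication by $s_+$ is a bounded invertible operator on $H^p$, so $s_+H^p=H^p$; since $s_0$ has all its roots on $\BT$ it is outer and multiplication by $s_0$ has dense range. Hence $\overline{sH^p}=\overline{s_-s_0H^p}=s_-H^p$, the last equality holding because $s_-H^p$ is closed of codimension $\deg(s_-)$ while $s_-(s_0H^p)$ is dense in it, using continuity of multiplication by the fixed polynomial $s_-$. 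As $\wtil\cP$ is finite-dimensional, closure commutes with adjoining it and the sum of a closed subspace and a finite-dimensional subspace is closed, so $\overline{\Ran(T_\om)}=s_-H^p+\wtil\cP$, a closed subspace.

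Next I would set up the ambient splitting $H^p=s_-H^p\oplus\cP_{\deg(s_-)-1}$, which follows from Hermite interpolation at the roots of $s_-$ (all inside $\BD$, so that division by $s_-$ preserves $H^p$). Let $\pi$ be the associated projection of $H^p$ onto $\cP_{\deg(s_-)-1}$ along $s_-H^p$. Writing each $w\in\wtil\cP$ as $(w-\pi w)+\pi w$ shows $s_-H^p+\wtil\cP=s_-H^p+\pi(\wtil\cP)$ with $\pi(\wtil\cP)\subseteq\cP_{\deg(s_-)-1}$, so $\overline{\Ran(T_\om)}=s_-H^p+\pi(\wtil\cP)$. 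Because $\cP_{k-1}\subseteq\cP_{\deg(s_-)-1}$ (as $k\le\deg(s_-)$) and $s_-H^p\cap\cP_{\deg(s_-)-1}=\{0\}$, the claim that $H^p=\overline{\Ran(T_\om)}+\cP_{k-1}$ is a direct sum reduces to the single finite-dimensional statement $\cP_{\deg(s_-)-1}=\pi(\wtil\cP)\oplus\cP_{k-1}$.

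The main work, and the principal obstacle, is this finite-dimensional computation. Analyzing the defining relation $rq=r_1s+r_2$ with $r_1,r_2\in\cP_{\deg(q)-1}$, for $r\in\cP_{\deg(s)-1}$ the quotient bound $\deg(r_1)\le\deg(q)-1$ is automatic, so membership in $\wtil\cP$ is governed solely by the remainder condition $\deg(r_2)\le\deg(q)-1$; since $s$ and $q$ are co-prime, multiplication by $q$ is invertible modulo $s$ on $\cP_{\deg(s)-1}\cong\BC[z]/(s)$, which makes the remainder conditions independent and yields $\dim\wtil\cP=\min\{\deg(s),\deg(q)\}$. I would then compute $\dim\pi(\wtil\cP)=\dim\wtil\cP-\dim(\wtil\cP\cap s_-H^p)$, where $\wtil\cP\cap s_-H^p=\{r\in\wtil\cP\mid s_-\text{ divides }r\}$, and show it equals $\min\{\deg(s_-),\deg(q)\}=\deg(s_-)-k$, together with $\pi(\wtil\cP)\cap\cP_{k-1}=\{0\}$. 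The dimension count $k+(\deg(s_-)-k)=\deg(s_-)=\dim\cP_{\deg(s_-)-1}$ then forces the required internal direct sum.

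The delicate point is thus controlling the divisibility condition defining $\wtil\cP\cap s_-H^p$ and the leading-coefficient behaviour of elements of $\pi(\wtil\cP)$ so as to separate them from the low-degree polynomials in $\cP_{k-1}$; co-primeness of $s$ and $q$ and careful degree bookkeeping are what make these counts exact. As a consistency check, in the Fredholm case $s_0=1$ one has $\overline{\Ran(T_\om)}=\Ran(T_\om)$ and the computation recovers the complement $\wtil\cQ=\cP_{k-1}$ already obtained in \cite{GtHJR1}, so the present argument is genuinely an extension covering the non-Fredholm case $s_0\neq1$, where the passage through the closure $\overline{sH^p}=s_-H^p$ is the new ingredient.
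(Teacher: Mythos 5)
Your proposal has a genuine gap: it only addresses the last assertion of the theorem (the direct sum decomposition $H^p=\overline{\Ran(T_\om)}+\wtil\cQ$) and treats the identity $\Ran(T_\om)=sH^p+\wtil\cP$ as ``already recorded in the theorem,'' i.e.\ as a hypothesis. But that identity, together with $\Dom(T_\om)=qH^p+\cP_{\deg(q)-1}$, is precisely what has to be proved, and it is where the real work lies. From the earlier paper one only gets the inclusions $qH^p+\cP_{\deg(q)-1}\subset\Dom(T_\om)$ and $T_\om(qH^p+\cP_{\deg(q)-1})=sH^p+\wtil\cP\subset\Ran(T_\om)$; the reverse inclusion $\Dom(T_\om)\subset qH^p+\cP_{\deg(q)-1}$ is the new content in the non-Fredholm case ($s_0$ nonconstant). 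The paper's proof of this is not formal bookkeeping: given $g\in\Dom(T_\om)$ with $sg=qh+r$, one uses co-primeness and a B\'ezout identity to write $g=q\wtil h+r_2$ with $\wtil h=\what h/s$, $\what h\in H^p$, and must then show $\wtil h\in H^p$. This requires first an analyticity argument at the roots of $s$ in $\BD$ and then an $L^p$ estimate obtained by splitting $\BT$ into arcs $\BT_1$ (around the roots of $s$ on $\BT$, where $|q|$ is bounded below) and $\BT_2$ (around the roots of $q$, where $|s|$ is bounded below), deriving a contradiction with $g\in H^p$ if $\int_\BT|\wtil h|^p=\infty$. Without some argument of this kind your closure computation is applied to a set you have not identified, so the proof is incomplete (and, as written, circular on this point).

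For the part you do treat, your route is a legitimate alternative to the paper's. The closure step $\overline{sH^p}=s_-H^p$ is the same (the paper isolates it as a lemma, using that $s_0$ is outer and $T_{s_-}$ is injective with closed range). After that, the paper replaces $\wtil\cP$ by $\wtil\cP_-=\{r \mid qr=r_1s_-+r_2,\ r_1,r_2\in\cP_{\deg(q)-1}\}$, shows $s_-H^p+\wtil\cP=s_-H^p+\wtil\cP_-$ by two explicit polynomial manipulations, and then invokes the complement lemma from the predecessor paper applied to $s_-$. You instead project $\wtil\cP$ into $\cP_{\deg(s_-)-1}$ along $s_-H^p$ and do a dimension count: $\dim\wtil\cP=\min\{\deg(s),\deg(q)\}$, $\dim(\wtil\cP\cap s_-H^p)$ computed via divisibility by $s_-$, and $\pi(\wtil\cP)\cap\cP_{k-1}=\{0\}$. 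These counts do check out (the co-primeness of $q$ with $s$, resp.\ with $s_0s_+$, makes multiplication by $q$ invertible modulo $s$, resp.\ modulo $s_0s_+$, which is what drives them), and your approach has the merit of making the codimension completely explicit; but you have only sketched these three computations, and they would need to be written out. In short: the complement argument is a workable variant, but the theorem's central identities for $\Dom(T_\om)$ and $\Ran(T_\om)$ are missing from your proof.
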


The following result will be useful in the proof of Theorem \ref{T:Rat(T)}.

\begin{lemma}\label{L:closure}
Factor $s\in\cP$ as $s=s_-s_0s_+$ with $s_-$, $s_0$ and $s_+$ having roots only inside, on, or outside $\BT$. Then $sH^p =s_-s_0 H^p$ and $\overline{s H^p}= s_- H^p$.
\end{lemma}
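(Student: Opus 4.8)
The plan is to prove both identities by analysing, for each polynomial factor $r\in\{s_+,s_-,s_0\}$, the multiplication operator $M_r\colon H^p\to H^p$, $h\mapsto rh$, exploiting where the roots of $r$ sit relative to $\BT$. For the first identity I would use that $s_+$ has no roots in $\overline{\BD}$, so $1/s_+\in H^\infty$ and multiplication by $1/s_+$ is a bounded two-sided inverse of $M_{s_+}$; in particular $s_+H^p=H^p$. Multiplying this set by the fixed polynomial $s_-s_0$ then gives $sH^p=s_-s_0\,s_+H^p=s_-s_0H^p$.

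For the second identity I would first record two facts. \emph{(a) $s_-H^p$ is closed and $M_{s_-}$ is a homeomorphism onto it.} Writing $s_-=uB$, where $B$ is the finite Blaschke product built from the roots $a_j$ of $s_-$ (all with $|a_j|<1$) and $u$ collects the outer factors $1-\bar a_j z$ (whose roots $1/\bar a_j$ lie outside $\overline{\BD}$), one has $1/u\in H^\infty$ and $|B|=1$ a.e.\ on $\BT$. Hence $M_B$ is an isometry of $H^p$ and $M_u$ is boundedly invertible, so $M_{s_-}=M_uM_B$ is bounded below; a bounded-below operator has closed range and is a homeomorphism onto it, whence $\overline{s_-A}=s_-\overline{A}$ for every $A\subseteq H^p$. \emph{(b) $s_0H^p$ is dense in $H^p$.} Granting (a) and (b), the first identity gives
\[
\overline{sH^p}=\overline{s_-s_0H^p}=\overline{s_-(s_0H^p)}=s_-\,\overline{s_0H^p}=s_-H^p,
\]
which is the claim.

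The substance, and the step I expect to be the main obstacle, is fact (b). It suffices to treat a single factor $z-\zeta$ with $|\zeta|=1$ and then pass to products. For one factor, since polynomials are dense in $H^p$ and $f-f(\zeta)\in(z-\zeta)H^p$ for every polynomial $f$, it is enough to show the constant $1$ lies in $\overline{(z-\zeta)H^p}$. This I would get from the explicit identity $(z-\zeta)\big(-\tfrac1\zeta\sum_{k=0}^{m-1}(z/\zeta)^k\big)=1-(z/\zeta)^m$ combined with a Cesàro average: the means $\tfrac1n\sum_{m=1}^n(z/\zeta)^m$ have modulus at most $1$ on $\BT$ and tend to $0$ pointwise off $\zeta$, hence tend to $0$ in $L^p(\BT)$ by dominated convergence, so $1$ is the $H^p$-limit of the elements $(z-\zeta)\cdot\big(-\tfrac1\zeta\,\tfrac1n\sum_{m=1}^n\sum_{k=0}^{m-1}(z/\zeta)^k\big)$ of $(z-\zeta)H^p$.

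Finally, for a product $s_0=\prod_j(z-\zeta_j)$ I would use that density of range is preserved under composition: if $S,T$ are bounded with dense range then, by continuity of $T$, $\overline{\Ran(TS)}\supseteq\overline{T(\overline{\Ran S})}=\overline{T(H^p)}=H^p$. Iterating over the factors $M_{z-\zeta_j}$ yields $\overline{s_0H^p}=H^p$, completing (b) and hence the lemma.
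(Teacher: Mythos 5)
Your proof is correct, and its skeleton is the same as the paper's: treat the three factors separately, show $s_+H^p=H^p$, $\overline{s_0H^p}=H^p$, and that multiplication by $s_-$ is bounded below (hence commutes with closures), then chain the identities. The difference lies in how you justify the two nontrivial ingredients. The paper disposes of $s_0$ in one line by quoting the fact that a polynomial with all roots on $\BT$ is an $H^\infty$ outer function (citing Nikolskii, Example 4.2.5), so that $\overline{s_0H^p}=H^p$ is immediate from the characterization of outer functions; you instead prove the density directly and elementarily, reducing to one root $\zeta\in\BT$, to the single element $1$, and then exhibiting $1$ as an $L^p$-limit of elements of $(z-\zeta)H^p$ via the Ces\`aro means of $(z/\zeta)^m$ -- this computation is correct (the identity $(z-\zeta)\bigl(-\tfrac1\zeta\sum_{k=0}^{m-1}(z/\zeta)^k\bigr)=1-(z/\zeta)^m$ checks out, the means are bounded by $1$ and vanish a.e.\ in the limit, and dominated convergence applies on the finite measure space $\BT$), and it has the merit of being self-contained, at the cost of length. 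Similarly, for $s_-$ the paper merely asserts that $T_{s_-}$ is injective with closed range, whereas you derive boundedness below from the inner--outer splitting $s_-=uB$ with $B$ a finite Blaschke product (isometric on $H^p$) and $u$ invertible in $H^\infty$; this is a clean and complete justification of the step the paper leaves implicit. The passage from dense range of each factor to dense range of the product, and the identity $\overline{s_-A}=s_-\overline{A}$ for a homeomorphism onto closed range, are both handled correctly. In short: same route, but you replace the paper's citation and its unproved assertion with short first-principles arguments.
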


\begin{proof}[\bf Proof]
Since $s_+$ has no roots inside $\overline{\BD}$, we have $s_+ H^p=H^p$. Furthermore, $s_0$ is an $H^\infty$ outer function (see, e.g., \cite{N}, Example 4.2.5)
so that $\overline{s_0 H^p}=H^p$.
Since $s_-$ has all it's roots inside $\BD$, $T_{s_-}:H^p\to H^p$ is an injective operator with closed range. Consequently, we have
\[
\overline{s H^p}
=\overline{s_- s_0 s_+ H^p}
=\overline{s_- s_0 H^p}
=s_- \overline{ s_0 H^p}=s_- H^p,
\]
as claimed.
\end{proof}

\begin{proof}[\bf Proof of Theorem \ref{T:Rat(T)}]
In case $T_\om$ is Fredholm, i.e., $s_0$ constant, all statements follow from Theorem 1.2 in \cite{GtHJR1}. Without the Fredholm condition, the formula for $\kernel(T_\om)$ follows from \cite[Lemma 4.1]{GtHJR1} and for $\Dom(T_\om)$ and $\Ran(T_\om)$ Proposition 4.5 of \cite{GtHJR1} provides
\begin{equation}\label{DomRanIncl}
\begin{aligned}
qH^p+\cP_{\deg(q)-1}&\subset \Dom(T_\om);\\
T_\om (qH^p+\cP_{\deg(q)-1})&=s H^p+\wtil\cP\subset \Ran(T_\om).
\end{aligned}
\end{equation}
Thus in order to prove \eqref{DomRanId}, it remains to  show that $\Dom(T_{\om})\subset q H^p +\cP_{\deg(q)-1}$.

Assume $g\in \Dom(T_{\om})$. Thus there exist $h\in H^p$ and $r\in \cP_{\deg(q)-1}$ so that $s g= q h +r$. Since $s$ and $q$ are co-prime, there exist $a,b\in\cP$ such that $s a+ q b\equiv 1$. Next write $ar=q r_1+r_2$ for $r_1,r_2\in\cP$ with $\deg(r_2)<\deg (q)$. Thus $sg=q h +r=q h+ q br + s ar=q(h+br+sr_1)+ s r_2$. Hence $g=q(h+br+sr_1)/s +r_2$. We are done if we can show that $\wtil{h}:=(h+br+sr_1)/s$ is in $H^p$.

The case where $g$ is rational is significantly easier, but still gives an idea of the complications that arise, so we include a proof. Hence assume $g\in\Rat\cap H^p$. Then $h=(s g-r)/q$ is also in $\Rat \cap H^p$, and $\wtil{h}$ is also rational. It follows that $q(h+br+sr_1)/s=q \wtil{h} =g-r_2\in \Rat \cap H^p$ and thus cannot have poles in $\overline{\BD}$. Since $q$ and $s$ are co-prime and $h$ cannot have poles inside $\overline{\BD}$, it follows that $\wtil{h}=(h+br+sr_1)/s$ cannot have poles in $\overline{\BD}$. Thus $\wtil{h}$ is a rational function with no poles in $\overline{\BD}$, which implies $\wtil{h}\in H^p$.

Now we prove the claim for the general case. Assume $q\wtil{h} +r_2=g\in H^p$, but $\wtil{h}=(h+br+sr_1)/s\not \in H^p$, i.e., $\wtil{h}$ is not analytic on $\BD$ or $\int_{\BT} |\wtil{h}(z)|^p\textup{d}z=\infty$. Set $\widehat{h}=h+br+sr_1\in H^p$, so that $\wtil{h}=\what{h}/s$. We first show $\wtil{h}$ must be analytic on $\BD$. Since $\wtil{h}=\what{h}/s$ and $\what{h}\in H^p$, $\wtil{h}$ is analytic on $\BD$ except possibly at the roots of $s$. However, if $\wtil{h}$ would not be analytic at a root $z_0\in\BD$ of $s$, then also $g= q \wtil{h}+r_2$ should not be analytic at $z_0$, since $q$ is bounded away from 0 on a neighborhood of $z_0$, using that $s$ and $q$ are co-prime. Thus $\wtil{h}$ is analytic on $\BD$. It follows that $\int_{\BT} |\wtil{h}(z)|^p\textup{d}z=\infty$.

Since $s$ and $q$ are co-prime, we can divide $\BT$ as $\BT_1\cup \BT_2$ with $\BT_1\cap \BT_2=\emptyset$ and each of $\BT_1$ and $\BT_2$ being nonempty unions of intervals, with $\BT_1$ containing all roots of $s$ on $\BT$ as interior points and $\BT_2$ containing all roots of $q$ on $\BT$ as interior points. Then there exist $N_1,N_2>0$ such that $|q(z)|> N_1$ on $\BT_1$ and $|s(z)|>N_2$ on $\BT_2$.
Note that
\begin{align*}
\int_{\BT_2}|\wtil{h}(z)|^p \textup{d}z
&=\int_{\BT_2}|\what{h}(z)/s(z)|^p \textup{d}z
\leq  N_2^{-p} \int_{\BT_2}|\what{h}(z)|^p \textup{d}z
\leq N_2^{-p} \|\what{h}\|^p_{H^p}<\infty.
\end{align*}
Since $\int_{\BT}|\wtil{h}(z)|^p \textup{d}z=\infty$ and $\int_{\BT_2}|\wtil{h}(z)|^p \textup{d}z<\infty$, it follows that $\int_{\BT_1}|\wtil{h}(z)|^p \textup{d}z=\infty$. However, since $|q(z)|> N_1$ on $\BT_1$, this implies that
\begin{align*}
\|g-r_2\|^p_{H^p}&=\int_{\BT}|g(z)-r_2(z)|^p \textup{d}z
=\int_{\BT}|q(z)\wtil{h}(z)|^p \textup{d}z
\geq  \int_{\BT_1}|q(z) \wtil{h}(z)|^p \textup{d}z\\
&\geq N_1^p \int_{\BT_1}|\wtil{h}(z)|^p \textup{d}z =\infty,
\end{align*}
in contradiction with the assumption that $g\in H^p$. Thus we can conclude that $\wtil{h}\in H^p$ so that $g=q \wtil{h}+r_2$ is in $q H^p+\cP_{\deg(q)-1}$.

It remains to show that $H^p=\overline{\Ran(T_\om)}+\wtil{\cQ}$ is a direct sum decomposition of $H^p$. Again, for the case that $T_\om$ is Fredholm this follows from \cite[Theorem 1.2]{GtHJR1}. By the preceding part of the proof we know, even in the non-Fredholm case, that $\Ran(T_\om)=s H^p+\wtil{\cP}$. Since $\wtil{\cP}$ is finite dimensional, and thus closed, we have
\[
\overline{\Ran(T_\om)}=\overline{s H^p}+\wtil{\cP}= s_- H^p + \wtil{\cP},
\]
using Lemma \ref{L:closure} in the last identity. We claim that
\[
\overline{\Ran(T_\om)}=s_- H^p + \wtil{\cP}=s_- H^p + \wtil{\cP}_-,
\]
where $\wtil{\cP}_-$ is defined by
\[
\wtil{\cP}_{-}:=\{r\in\cP \mid qr=r_1 s_- +r_2\mbox{ for }r_1,r_2\in\cP_{\deg(q)-1}\}\subset \cP_{\deg(s_-)-1}.
\]
Once the above identity for $\overline{\Ran(T_\om)}$ is established, the fact that $\wtil{\cQ}$ is a complement of $\overline{\Ran(T_\om)}$ follows directly by applying Lemma 4.8 of \cite{GtHJR1} to $s=s_-$.

We first show that $\overline{\Ran(T_\om)}=s_- H^p +\wtil{\cP}$ is contained in $s_- H^p+\wtil{\cP}_-$. Let $g=s_- h+r$ with $h\in H^p$ and $r\in\wtil{\cP}$, say $qr=r_1s +r_2$ with $r_1,r_2\in\cP_{\deg(q)-1}$. Write $r_1 s_0s_+=\wtil{r}_1 q + \wtil{r}_2$ with $\deg(\wtil{r}_2)<\deg(q)$. Then
\[
qr= r_1 s_- s_0s_+ +r_2=q\wtil{r}_1 s_-+\wtil{r}_2s_- +r_2,\mbox{ so that }
q(r-\wtil{r}_1s_-)= \wtil{r}_2s_- + r_2,
\]
with $r_2,\wtil{r_2}\in\cP_{\deg(q)-1}$. Thus $r-\wtil{r}_1s_-\in\wtil{\cP}_-$. Therefore, we have
\[
g=s_-(h+\wtil{r}_1)+(r-\wtil{r}_1s_-)\in s_- H^p +\wtil{\cP}_-,
\]
proving that $\overline{\Ran(T_\om)}\subset s_- H^p+\wtil{\cP}_-$.

For the reverse inclusion, assume $g=s_-h+r\in s_- H^p+\wtil{\cP}_-$. Say $qr=r_1 s_- + r_2$ with $r_1,r_2\in\cP_{\deg(q)-1}$. Since $s_0s_+$ and $q$ are co-prime and $\deg(r_1)<\deg(q)$ there exit polynomials $\wtil{r}_1$ and $\wtil{r}_2$ with $\deg(\wtil{r}_1)<\deg(q)$ and $\deg(\wtil{r}_2)<\deg(s_0s_+)$ that satisfy the B\'ezout equation $\wtil{r}_1 s_0 s_+ + \wtil{r}_2 q=r_1$. Then
\[
\wtil{r}_1 s+ r_2 = \wtil{r}_1 s_0s_+s_- + r_2= (r_1-\wtil{r}_2 q)s_- + r_2= r_1 s_- + r_2  -q \wtil{r}_2 s_- = q(r-\wtil{r}_2 s_-).
\]
Hence $r-\wtil{r}_2 s_-$ is in $\wtil{\cP}$, so that $g= s_- h+ r= s_-(h+\wtil{r}_2)+ (r-\wtil{r}_2 s_-)\in s_- H^p +\wtil{\cP}$. This proves the reverse inclusion, and hence completes the proof of Theorem \ref{T:Rat(T)}.
 \end{proof}

The following result makes precise when $T_\om$ is injective and when $T_\om$ has dense range, even in the case where $T_\om$ is not Fredholm.

\begin{proposition}\label{P:injectdenserange}
Let $\om\in \Rat$. Then $T_\om$ is injective if and only if
\[
\sharp \left\{\begin{array}{l}\!\!\!
 \textrm{poles of } \om \textrm{ in }\overline{\BD} \textrm{ multi.}\!\!\! \\
\!\!\!\textrm{taken into account}\!\!\!
\end{array}\right\}  \leq
\sharp \left\{\begin{array}{l}\!\!\! \textrm{zeroes of } \om\textrm{ in }\overline{\BD}  \textrm{ multi.}\!\!\! \\
\!\!\!\textrm{taken into account}\!\!\!
\end{array}\right\}.
\]
Moreover, $T_\om$ has dense range if and only if
\[
\sharp \left\{\begin{array}{l}\!\!\!
 \textrm{poles of } \om \textrm{ in }\overline{\BD} \textrm{ multi.}\!\!\! \\
\!\!\!\textrm{taken into account}\!\!\!
\end{array}\right\}  \geq
\sharp \left\{\begin{array}{l}\!\!\! \textrm{zeroes of } \om\textrm{ in }\BD  \textrm{ multi.}\!\!\! \\
\!\!\!\textrm{taken into account}\!\!\!
\end{array}\right\}.
\]
In particular, $T_\om$ is injective or has dense range.
\end{proposition}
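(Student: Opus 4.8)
The plan is to reduce the general symbol to one in $\Rat(\BT)$, for which injectivity and dense range can be read off Theorem \ref{T:Rat(T)}, and to dispose of the Fredholm case separately as both a base case and a consistency check. Indeed, when $\om$ has no zeroes on $\BT$ the operator $T_\om$ is Fredholm, and Theorem \ref{T:recall1} already yields both equivalences: $T_\om$ is injective iff $\Index(T_\om)\le 0$, i.e. iff the number of poles in $\overline{\BD}$ is at most the number of zeroes in $\BD$ (which equals the number in $\overline{\BD}$, since there are no zeroes on $\BT$), and $T_\om$ is surjective, hence has dense range, iff $\Index(T_\om)\ge 0$. For $\om=s/q_0\in\Rat(\BT)$, Theorem \ref{T:Rat(T)} gives $\kernel(T_\om)=\{r_0/s_+\mid \deg(r_0)<\deg(q_0)-\deg(s_-s_0)\}$, so $T_\om$ is injective iff $\deg(q_0)\le\deg(s_-s_0)$, while $\overline{\Ran(T_\om)}$ has complement $\wtil\cQ=\cP_{k-1}$ with $k=\max\{\deg(s_-)-\deg(q_0),0\}$, so $T_\om$ has dense range iff $\deg(s_-)\le\deg(q_0)$; these are exactly the asserted inequalities in this case, since for $\om\in\Rat(\BT)$ all poles lie on $\BT$.

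What remains is to remove the poles of $\om$ lying off $\BT$. I would write $q=q_-q_0q_+$ with roots inside, on and outside $\BT$, keeping the full numerator $s$, so that $\om_\BT:=s/q_0\in\Rat(\BT)$ differs from $\om$ only through the denominator factors $q_-$ and $q_+$. The poles outside $\overline{\BD}$ are harmless: $q_+$ is invertible in $H^\infty$, so right multiplication composes as $T_{\om q_+}=T_\om T_{q_+}$ with $T_{q_+}$ boundedly invertible, and this preserves both injectivity and dense range. For the interior poles I would use the Blaschke factorisation $q_-=cBv$, where $B$ is a finite Blaschke product of degree $m=\deg(q_-)$ and $v=\prod_j(1-\overline{a_j}z)$ is invertible in $H^\infty$; on $\BT$ one has $1/q_-=c^{-1}\overline{B}\,v^{-1}$. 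Peeling off $v^{-1}$ by another boundedly invertible factor and using the co-analytic composition rule $T_{\overline{B}\,\om_\BT}=T_{\overline{B}}T_{\om_\BT}$, the problem reduces to analysing $T_{\overline{B}}T_{\om_\BT}$, where $T_{\overline{B}}=T_B^*$ is surjective with $\dim\kernel(T_{\overline{B}})=m$. The composition identities for the unbounded $T_{\om_\BT}$, together with the accompanying domain equalities, are the multiplicativity properties of $T_\om$ from \cite{GtHJR1}; I would work multiplicatively rather than through adjoints precisely because the naive identification $T_\om^*=T_{\om^\#}$ fails here, due to the $\Rat_0(\BT)$-correction built into the domain.

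Finally I would combine the two ingredients. Since $T_{\overline{B}}$ is bounded and surjective with $m$-dimensional kernel, $T_{\overline{B}}T_{\om_\BT}$ is injective iff $T_{\om_\BT}$ is injective and $\Ran(T_{\om_\BT})\cap\kernel(T_{\overline{B}})=\{0\}$, and it has dense range iff $\overline{\Ran(T_{\om_\BT})}+\kernel(T_{\overline{B}})=H^p$. Feeding in the explicit kernel and the complement $\wtil\cQ$ from Theorem \ref{T:Rat(T)}, both conditions become numerical thresholds shifted by exactly $m$: injectivity becomes $m+\deg(q_0)\le\deg(s_-s_0)$ and dense range becomes $m+\deg(q_0)\ge\deg(s_-)$, which are precisely the stated inequalities once the $m=\deg(q_-)$ interior poles are added to the $\deg(q_0)$ poles on $\BT$ to count all poles in $\overline{\BD}$. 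The last assertion then follows at once: the failure of both inequalities would force $\sharp\{\mbox{zeroes in }\BD\}>\sharp\{\mbox{poles in }\overline{\BD}\}>\sharp\{\mbox{zeroes in }\overline{\BD}\}\ge\sharp\{\mbox{zeroes in }\BD\}$, a contradiction. I expect the main obstacle to be the two transversality statements in this paragraph, namely that the model space $\kernel(T_{\overline{B}})=K_B$ meets $\Ran(T_{\om_\BT})$ trivially exactly up to the injectivity threshold and complements $\overline{\Ran(T_{\om_\BT})}$ exactly past the dense-range threshold; these require showing that the relevant dimension counts are sharp, i.e. that $K_B$ sits in general position with respect to $sH^p$ and to $\wtil\cQ=\cP_{k-1}$.
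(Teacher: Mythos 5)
There is a genuine gap, and you have in fact located it yourself: the two ``transversality'' statements in your final paragraph are not a technical loose end but the entire content of the proposition beyond the Fredholm case, and no argument is offered for them. Concretely, after your reduction you must show that $K_B\cap\Ran(T_{\om_\BT})=\{0\}$ exactly when $\deg(q_-)+\deg(q_0)\le\deg(s_-s_0)$, and that $K_B+\overline{\Ran(T_{\om_\BT})}=H^p$ exactly when $\deg(q_-)+\deg(q_0)\ge\deg(s_-)$. Dimension counting gives only the easy implications (an $m$-dimensional space cannot complement a subspace of codimension $>m$, etc.); it cannot give the converses, because when $s_0$ is nonconstant $\Ran(T_{\om_\BT})=sH^p+\wtil\cP$ is dense but not closed, and a finite-dimensional space can perfectly well meet such a subspace nontrivially, or sit entirely inside $\overline{\Ran(T_{\om_\BT})}=s_-H^p+\wtil\cP_-$, without any codimension obstruction. ``General position'' is precisely what has to be proved, and since your $\om_\BT=s/q_0$ retains the full numerator $s$ (with zeroes inside, on and outside $\BT$) while $K_B$ consists of rational functions with poles at the reflected points $1/\overline{a_j}$, the required intersection computation is genuinely nontrivial.

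It is worth comparing with how the paper sidesteps this. Instead of splitting only the denominator, it invokes the Wiener--Hopf-type factorization $\om=\om_-z^\kappa\om_0\om_+$ of \cite[Lemma 5.1]{GtHJR1}, with $T_{\om_\pm}$ boundedly invertible and $\om_0$ having \emph{all} its zeroes and poles on $\BT$; the entire off-circle data is compressed into the single integer $\kappa$. For $\kappa\ge0$ one is back in $\Rat(\BT)$. For $\kappa<0$ the cokernel piece is not a model space $K_B$ but the polynomial space $\cP_{|\kappa|-1}=\kernel(T_{z^{\kappa}})$, and the intersection $\cP_{|\kappa|-1}\cap\Ran(T_{\om_0})$ is settled by an explicit cancellation argument: a polynomial $r$ in the range forces $(q_0r+r')/s_0\in\Rat(\BT)\cap H^p$, hence a polynomial, hence all roots of $s_0$ must cancel against $q_0r+r'$, which a degree count forbids above the threshold; below the threshold a kernel element is written down explicitly. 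Moreover the dense-range claim for $\kappa<0$ becomes vacuous there, since $\kappa<0$ already implies the asserted inequality, so one only has to check that dense range actually holds. If you want to keep your Blaschke-product route you would have to reproduce an analogue of that cancellation argument for $K_B$ against $sH^p+\wtil\cP$, and also verify the multiplicativity $T_{\overline{B}\,\om_\BT}=T_{\overline{B}}T_{\om_\BT}$ with its domain identity, which \cite{GtHJR1} supplies only for $B=z^{|\kappa|}$.
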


\begin{proof}[\bf Proof]
First assume $\om\in \Rat(\BT)$. By Corollary 4.2 in \cite{GtHJR1}, $T_\om$ is injective if and only if the number of zeroes of $\om$ inside $\overline{\BD}$ is greater than or equal to the number of poles of $\om$, in both cases with multiplicity taken into account. By Theorem \ref{T:Rat(T)}, $T_\om$ has dense range precisely when $\wtil{\cQ}$ in \eqref{tilQ} is trivial. The latter happens if and only if the number of poles of $\om$ is greater than or equal to the number of zeroes of $\om$ inside $\BD$, again taking multiplicities into account. Since in this case all poles of $\om$ are in $\BT$, our claim follows for $\om\in \Rat(\BT)$.

Now we turn to the general case, i.e., we assume $\om\in\Rat$. In the remainder of the proof, whenever we speak of numbers of zeroes or poles, this always means that the respective multiplicities are to be taken into account. Recall from \cite[Lemma 5.1]{GtHJR1} that we can factor $\om(z)= \om_-(z)z^\kappa \om_0(z) \om_+(z)$ with $\om_-,\om_0,\om_+\in\Rat$, $\om_-$ having no poles or zeroes outside $\BD$, $\om_+$ having no poles or zeroes inside $\overline{\BD}$ and $\om_0$ having poles and zeroes only on $\BT$, and $\kappa$ the difference between the number of zeroes of $\om$ in $\BD$ and the number of poles of $\om$ in $\BD$. Moreover, we have $T_\om=T_{\om_-}T_{z^\kappa \om_0} T_{\om_+}$ and $T_{\om_-}$ and $T_{\om_+}$ are boundedly invertible on $H^p$. Thus $T_\om$ is injective or has closed range if and only it $T_{z^\kappa\om_0}$ is injective or has closed range, respectively.

Assume $\kappa \geq 0$. Then $z^\kappa \om_0\in \Rat(\BT)$ and the results for the case that the symbol is in $\Rat(\BT)$ apply. Since the zeroes and poles of $\om_0$ coincide with the zeroes and poles of $\om$ on $\BT$, it follows that the number of poles of $z^\kappa \om_0$ is equal to the number of poles of $\om$ on $\BT$ while the number of zeroes of $z^\kappa \om_0$ is equal to $\kappa$ plus the number of zeroes of $\om$ on $\BT$ which is equal to the number of zeroes of $\om$ in $\overline{\BD}$ minus the number of poles of $\om$ in $\BD$. It thus follows that $T_{z^\kappa \om_0}$ is injective, and equivalently $T_\om$ is injective, if and only if the number of zeroes of $\om$ in $\overline{\BD}$ is greater than or equal to the number of poles of $\om$ in $\overline{\BD}$, as claimed.

Next, we consider the case where $\kappa<0$. In that case $T_{z^\kappa \om_0}=T_{z^\kappa}T_{\om_0}$, by Lemma 5.3 of \cite{GtHJR1}. We prove the statements regarding injectivity and $T_\om$ having closed range separately.

First we prove the injectivity claim for the case where $\kappa<0$. Write $\om_0 =s_0/q_0$ with $s_0,q_0\in\cP$ co-prime. Note that all the roots of $s_0$ and $q_0$ are on $\BT$. We need to show that $T_{z^\kappa \om_0}$ is injective if and only if $\deg(s_0) \geq \deg(q_0)-\kappa$ (recall, $\kappa$ is negative).

Assume $\deg(s_0)+\kappa \geq \deg(q_0)$. Then $\deg(s_0) > \deg(q_0)$, since $\kappa<0$, and thus $T_{\om_0}$ is injective. We have $\kernel(T_{z^{\kappa}})=\cP_{|\kappa|-1}$. So it remains to show $\cP_{|\kappa|-1} \cap \Ran (T_{\om_0})=\{0\}$. Assume $r\in\cP_{|\kappa|-1}$ is also in $\Ran (T_{\om_0})$. So, by Lemma 2.3 in \cite{GtHJR1}, there exist $g\in H^p$ and $r'\in\cP_{\deg(q_0)-1}$ so that $s_0 g=q_0 r+ r'$, i.e., $g=(q_0 r+ r')/s_0$. This shows that $g$ is in $\Rat(\BT)\cap H^p$, which can only happen in case $g$ is a polynomial. Thus, in the fraction $(q_0 r+ r')/s_0$, all roots of $s_0$ must cancel against roots of $q_0 r+ r'$. However, since $\deg(s_0)+\kappa \geq \deg(q_0)$, with $\kappa<0$, $\deg(r)<\deg |\kappa|-1$ and $\deg(r')<\deg(q_0)$, we have $\deg(q_0 r + r')<\deg(s_0)$ and it is impossible that all roots of $s_0$ cancel against roots of $q_0 r + r'$, leading to a contradiction. This shows $\cP_{|\kappa|-1} \cap \Ran (T_{\om_0})=\{0\}$, which implies $T_{z^{\kappa\om_0}}$ is injective. Hence also $T_\om$ is injective.

Conversely, assume $\deg(s_0)+\kappa < \deg(q_0)$, i.e., $\deg(s_0)< \deg(q_0)+|\kappa|=:b$, since $\kappa<0$. Then
\[
s_0\in \cP_{b-1}=q_0 \cP_{|\kappa|-1} +\cP_{\deg(q_0)-1}.
\]
This shows there exist $r\in\cP_{|\kappa|-1}$ and $r'\in\cP_{\deg(q_0)-1}$ so that $s_0= q_0 r+ r'$. In other words, the constant function $g\equiv 1\in H^p$ is in $\Dom (T_{\om_0})$ and $T_{\om_0}g=r\in \cP_{|\kappa|-1}=\kernel (T_{z^\kappa})$, so that $g\in \kernel (T_{z^\kappa \om_0})$. This implies $T_\om$ is not injective.

Finally, we turn to the proof of the dense range claim for the case $\kappa<0$. Since $\kappa<0$ by assumption, $\om$ has more poles in $\overline{\BD}$ (and even in $\BD$) than zeroes in $\BD$. Thus to prove the dense range claim in this case, it suffices to show that $\kappa<0$ implies that $T_{z^\kappa\om_0}$ has dense range. We have $T_{z^\kappa \om_0}=T_{z^\kappa}T_{\om_0}$ and $T_{z^\kappa}$ is surjective. Also, $\om_0\in \Rat(\BT)$ has no zeroes inside $\BD$. So the proposition applies to $\om_0$, as shown in the first paragraph of the proof, and it follows that $T_{\om_0}$ has dense range. But then also $T_{z^\kappa \om_0}=T_{z^\kappa}T_{\om_0}$ has dense range, and our claim follows.
\end{proof}

\section{The spectrum of $T_\omega$}\label{S:Spectrum}

In this section we determine the spectrum and various subparts of the spectrum of $T_\om$ for the general case, $\om\in\Rat$, as well as some refinements for the case where $\om\in\Rat(\BT)$ is proper. In particular, we prove our main results, Theorems \ref{T:main1} and \ref{T:main2}.

Note that for $\om\in\Rat$ and $\la\in\BC$ we have $\la I-T_\om=T_{\la-\om}$. Thus we can relate questions on the spectrum of $T_\om$ to question on injectivity, surjectivity, closed rangeness, etc.\ for Toeplitz-like operators with an additional complex parameter. By this observation, the spectrum of $T_\om$, and its various subparts, can be determined using the results of Section \ref{S:Review}.

\begin{proof}[\bf Proof of Theorem \ref{T:main1}]
Since $\la I-T_\om=T_{\la-\om}$ and $T_{\la-\om}$ is Fredholm if and only if $\la-\om$ has no zeroes on $\BT$, by Theorem \ref{T:recall1}, it follows that $\la$ is in the essential spectrum if and only if $\la=\om(e^{i\theta})$ for some $0\leq \theta\leq 2\pi$. This shows that $\si_\textup{ess}(T_\om)$ is equal to $\om(\BT)$.

To see that $\om(\BT)$ is an algebraic curve, let $\omega=s/q$ with  $s,q\in\cP$ co-prime. Then $\la=u+iv=\om(z)$ for $z=x+iy$ with $x^2+y^2=1$ if and only if $\la q(z)-s(z)=0$. Denote
$q(z)=q_1(x,y)+iq_2(x,y)$ and $s(z)=s_1(x,y)+is_2(x,y)$, where $z=x+iy$ and the functions
$q_1, q_2, s_1, s_2$ are real polynomials in two variables. Then $\la=u+iv$ is on the curve $\om(\BT)$ if and only if
\begin{align*}
q_1(x,y)u-q_2(x,y)v&=s_1(x,y),\\
q_2(x,y)u+q_1(x,y)v&=s_2(x,y),\\
x^2+y^2&=1.
\end{align*}
Solving for $u$ and $v$, this is equivalent to
\begin{align*}
(q_1(x,y)^2+q_2(x,y)^2)u-(q_1(x,y)s_1(x,y)+q_2(x,y)s_2(x,y))&=0,\\
(q_1(x,y)^2+q_2(x,y)^2)v-(q_1(x,y)s_2(x,y)-q_2(x,y)s_1(x,y))&=0,\\
x^2+y^2&=1.
\end{align*}
This describes an algebraic curve in the plane.

For $\lambda$ in the complement of the curve $\om(\BT)$ the operator $\lambda I -T_\om=T_{\la-\om}$ is
Fredholm, and according to Theorem \ref{T:recall1} the index is given by
$$
\Index (\lambda-T_\om)=
\sharp\{\textrm{ poles of } \om \textrm{ in } \overline{\BD}\}-
 \sharp\{\textrm{zeroes of } \om-\lambda \textrm{ inside }\BD\},
$$
taking the multiplicities of the poles and zeroes into account. Indeed, $\lambda - \om = \frac{\lambda q - s}{q}$ and since $q$ and $s$ are co-prime, $\lambda q - s$ and $q$ are also co-prime. Thus Theorem \ref{T:recall1} indeed applies to $T_{\la-\om}$. Furthermore, $\lambda - \om$ has the same poles as $\om$, i.e., the roots of $q$. Likewise, the zeroes of $\la-\om$ coincide with the roots of the polynomial $\lambda q - s$. Since the roots of this polynomial depend continuously on the parameter $\lambda$ the number of them is constant on connected components of the complement of the curve $\omega(\BT)$.

That the index is constant on connected components of the complement of the essential spectrum in fact holds for any unbounded densely defined operator (see \cite[Theorem VII.5.2]{S71}; see also \cite[Proposition XI.4.9]{C90} for the bounded case; for a much more refined analysis of this point see \cite{FK}).

Finally, the relation between the index of $T_{\la-\om}$ and $\la$ being in the resolvent set, point spectrum or residual spectrum follows directly by applying the last part of Theorem \ref{T:recall1} to $T_{\la-\om}$.
\end{proof}

Next we prove Theorem \ref{T:main2} using some of the new results on $T_\om$ derived in Section \ref{S:Review}.

\begin{proof}[\bf Proof of Theorem \ref{T:main2}]
That the two formulas for the numbers $k_q$, $k_\la^-$ and $k_\la^0$ coincides follows from the analysis in the proof of Theorem \ref{T:main1}, using the co-primeness of $\la q -s$ and $q$. By Theorem \ref{T:recall1}, $T_{\la-\om}$ is Fredholm if and only if $k_\la^0=0$, proving the formula for $\si_\textup{ess}(T_\om)$. The formula for the resolvent set follows directly from the fact that the resolvent set is contained in the complement of $\si_\textup{ess}(T_\om)$, i.e., $k_\la^0=0$, and that it there coincides with the set of $\la$'s for which the index of $T_{\la-\om}$ is zero, together with the formula for $\Index(T_{\la-\om})$ obtained in Theorem \ref{T:recall1}.

The formulas for the point spectrum and residual spectrum follow by applying the criteria for injectivity and closed rangeness of Proposition \ref{P:injectdenserange} to $T_{\la-\om}$ together with the fact that $T_{\la-\om}$ must be either injective or have dense range.

For the formula for the continuous spectrum, note that $\si_\textup{c}(T_\om)$ must be contained in the essential spectrum, i.e., $k_\la^0>0$. The condition $k_\la^- \leq k_q\leq k_\la^- + k_\la^0$ excludes precisely that $\la$ is in the point or residual spectrum.
\end{proof}

For the case where $\om\in\Rat(\BT)$ is proper we can be a bit more precise.

\begin{theorem}\label{T:spectrum2}
Let $\om \in\textup{Rat}(\mathbb{T})$ be proper, say $\om=s/q$ with $s,q\in\cP$ co-prime. Thus $\degr( s) \leq \degr( q)$ and all roots of $q$ are on $\BT$. Let $a$ be the leading coefficient of $q$ and $b$ the coefficient of $s$ corresponding to the monomial $z^{\deg(q)}$, hence $b=0$ if and only if $\om$ is strictly proper. Then $\si_\textup{r}(T_\om)=\emptyset$, and the point spectrum is given by
\[
\si_\textup{p}(T_\om)=\om(\BC\backslash \overline{\BD}) \cup \{b/a\}.
\]
Here $\om(\BC\backslash \overline{\BD})=\{\om (z) \mid z\in \BC\backslash \overline{\BD}\}$.
In particular, if $\om$ is strictly proper, then $0=b/a$ is in $\si_\textup{p}(T_\om)$.
Finally,
\[
\si_\textup{c}(T_\om)=\{\lambda\in\mathbb{C} \mid k_\la^0 >0 \mbox{ and all roots of } \lambda q-s \mbox{ are in }\overline{\BD} \}.
\]
\end{theorem}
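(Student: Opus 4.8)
The plan is to derive everything from Theorem \ref{T:main2} by exploiting two features special to a proper symbol in $\Rat(\BT)$. First, since every root of $q$ lies on $\BT$, the count $k_q$ equals $\degr(q)=:n$. Second, the coefficient of $z^n$ in $\la q-s$ is $\la a-b$, so the degree of $\la q-s$ drops below $n$ exactly when $\la=b/a$. Writing $k_\la^+$ for the number of roots of $\la q-s$ outside $\overline{\BD}$ (multiplicities counted), the whole argument turns on the identity
\[
k_\la^-+k_\la^0+k_\la^+=\degr(\la q-s),
\]
which equals $n=k_q$ precisely when $\la\neq b/a$.

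For $\la\neq b/a$ we thus have $k_\la^-+k_\la^0+k_\la^+=k_q$, and substituting this into the formulas of Theorem \ref{T:main2} collapses them. The residual condition $k_q<k_\la^-$ is impossible since $k_\la^-\leq k_q$; the point-spectrum condition $k_q>k_\la^-+k_\la^0$ becomes $k_\la^+>0$; and the continuous-spectrum condition $k_\la^-\leq k_q\leq k_\la^-+k_\la^0$ reduces to $k_\la^+=0$ together with $k_\la^0>0$. Because $q$ has no zeroes outside $\overline{\BD}$, a root $z_0\in\BC\backslash\overline{\BD}$ of $\la q-s$ is exactly a point with $\om(z_0)=\la$; hence $k_\la^+>0$ is equivalent to $\la\in\om(\BC\backslash\overline{\BD})$. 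This already shows that no $\la\neq b/a$ lies in $\si_\textup{r}$, identifies the part of $\si_\textup{p}$ with $\la\neq b/a$ as $\om(\BC\backslash\overline{\BD})$, and gives the stated description of $\si_\textup{c}$ there, since $k_\la^+=0$ means all $n$ finite roots lie in $\overline{\BD}$.

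The value $\la=b/a$ must be handled separately, and I expect this to be the main obstacle, as it corresponds to a zero of $\la-\om$ escaping to infinity. Here $\degr(\la q-s)<n$, so $k_\la^-+k_\la^0+k_\la^+<n=k_q$; in particular $k_\la^-+k_\la^0<k_q$. Feeding this into Theorem \ref{T:main2}, the inequality $k_q>k_\la^-+k_\la^0$ holds automatically, so $b/a\in\si_\textup{p}$ unconditionally --- this accounts for the isolated point $\{b/a\}$, and in particular for $0\in\si_\textup{p}$ when $\om$ is strictly proper, since then $b=0$. The same strict inequality excludes $b/a$ from both $\si_\textup{r}$ and $\si_\textup{c}$. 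Combining the two cases yields $\si_\textup{r}=\emptyset$ and $\si_\textup{p}=\om(\BC\backslash\overline{\BD})\cup\{b/a\}$, the union absorbing any $z_0$ with $\om(z_0)=b/a$.

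Finally, assembling $\si_\textup{c}$ from the $\la\neq b/a$ analysis gives the claimed set $\{\la\mid k_\la^0>0 \text{ and all roots of }\la q-s\text{ lie in }\overline{\BD}\}$, with the understanding that $\la=b/a$ is excluded because it already sits in $\si_\textup{p}$; equivalently, the degree drop at $\la=b/a$ is read as a root at infinity, which lies outside $\overline{\BD}$, so that the condition ``all roots in $\overline{\BD}$'' fails there automatically. The care needed to keep the $b/a$ bookkeeping consistent across all three spectral parts is, I think, the only genuinely delicate point; everything else is a direct reading of Theorem \ref{T:main2}.
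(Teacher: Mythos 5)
Your proposal is correct and follows essentially the same route as the paper: both reduce everything to Theorem \ref{T:main2} via $k_q=\deg(q)$ and the chain $k_\la^-+k_\la^0\leq\deg(\la q-s)\leq\deg(q)$, identifying the two sources of strict inequality as the degree drop at $\la=b/a$ and a root of $\la q-s$ outside $\overline{\BD}$. Your explicit ``root at infinity'' reading of the degree drop in the description of $\si_\textup{c}$ is a sensible clarification of a point the paper's proof handles by simply writing $\si_\textup{c}=\{\la \mid k_\la^0>0,\ k_\la^-+k_\la^0=\deg(q)\}$, but the argument is the same.
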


\begin{proof}[\bf Proof]
Let $\om = s/q\in\Rat(\BT)$ be proper with $s,q\in\cP$ co-prime. Then $k_q=\deg(q)$. Since $\degr(s) \leq \deg(q)$, for any $\la\in\BC$ we have
\[
k_\la^-+k_\la^0\leq \deg(\la q-s)\leq \deg(q)=k_q.
\]
It now follows directly from \eqref{specparts} that $\si_\textup{r}(T_\om)=\emptyset$ and $\si_\textup{c}(T_\om)=\{\lambda \in\mathbb{C}\mid k_\lambda^0 >0, k_\lambda^-+k_\lambda^0=\deg(q)\}$. To determine the point spectrum, again using \eqref{specparts}, one has to determine when strict inequality occurs. We have $\deg(\la q-s)<\deg(q)$ precisely when the leading coefficient of $\la q$ is cancelled in $\la q-s$ or if $\la=0$ and $\deg(s)<\deg(q)$. Both cases correspond to $\la=b/a$. For the other possibility of having strict inequality, $k_\la^-+k_\la^0<\deg(\la q-s)$, note that this happens precisely when $\la q-s$ has a root outside $\overline{\BD}$, or equivalently $\la=\om(z)$ for a $z\not\in \overline{\BD}$.
\end{proof}

\section{The spectrum may be unbounded, the resolvent set empty}
%First examples on spectrum and resolvent set}
\label{S:Examples1}

In this section we present some first examples, showing that the spectrum can be unbounded
and the resolvent set may be empty.

\begin{example}\label{E:spectrum2}
Let $\om(z) = \frac{z - \alpha}{z - 1}$ for some $1\neq \alpha\in\BC$,  say $\alpha=a+ib$, with $a$ and $b$ real. Let $L\subset\BC$ be the line given by
\begin{equation}\label{Line}
L=\{z=x+iy\in\BC \mid 2by = (a^2 + b^2 - 1) + (2 - 2a)x \}
\end{equation}
Then we have
\begin{align*}
  \rho(T_\om)=\om(\BD),\quad & \sigma_\textup{ess} (T_\om)=\om(\BT)=L =\si_\tu{c}(T_\om), \\
  \si_\tu{p}(T_\om)&=\om(\BC\backslash\overline{\BD}),\quad  \si_\tu{r}(T_{\om})=\emptyset.
\end{align*}
Moreover, the point spectrum of $T_\om$ is the open half plane determined by $L$ that contains $1$ and the resolvent set of $T_\om$ is the other open half plane determined by $L$.\medskip

\begin{figure}
\begin{center}
\includegraphics[height=4cm]{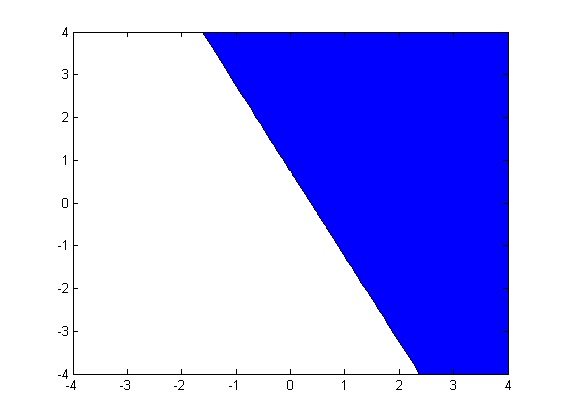}
\\
\caption
{Spectrum of $T_\om$ where $\om(z)=\frac{z-\alpha}{z-1}$, with $\alpha=-\frac{i}{2}$.}
\end{center}
\end{figure}

To see that these claims are true note that for $\lambda\not= 1$
\[
\lambda - \om(z) = \frac{z(\lambda - 1) + \alpha - \lambda}{z - 1} = \frac{1}{\lambda - 1}\frac{z + \frac{\alpha - \lambda}{\lambda - 1}}{z -1},
\]
while for $\lambda = 1$ we have $\lambda - \om(z) = \frac{\alpha - \lambda}{z - 1}$. Thus $\lambda = 1\in\sigma_\textup{p}(T_\om)$ for every $1\neq \alpha\in\BC$ as in that case $k_q=1> 0=k_\la^-+k_\la^0$. For $\la\neq 1$, $\la-\om$ has a zero at $\frac{\al-\al}{\la-1}$ of multiplicity one. For $\lambda = x + iy$  we have $\vert \alpha - \lambda \vert = \vert \lambda - 1 \vert $ if and only if $ (a - x)^2 + (b - y)^2 = (x - 1)^2 + y^2$, which in turn is equivalent to $2by = (a^2 + b^2 - 1) + (2 - 2a)x$. Hence the zero of $\la-\om$ is on $\BT$ precisely when $\la$ is on the line $L$. This shows $\si_\tu{ess}=L$. One easily verifies that the point spectrum and resolvent set correspond to the two half planes indicated above and that these coincide with the images of $\om$ under $\BC\backslash\overline\BD$ and $\BD$, respectively. Since $\la-\om$ can have at most one zero, it is clear from Theorem \ref{T:main2} that $\si_\tu{r}(T_\om)=\emptyset$, so that $\si_\tu{c}(T_\om)=L=\si_\tu{ess}(T_\om)$, as claimed.
\hfill$\Box$
%
%From Theorem \ref{T:main2} $\lambda\in\rho(T_\om)$ exactly when $\left\vert\frac{\alpha - \lambda}{\lambda - 1}\right \vert < 1$.
%Set $\lambda = x + iy$ and $\alpha = a + ib$. Then
%$\vert \alpha - \lambda \vert = \vert \lambda - 1 \vert $ if and only if
%$ (a - x)^2 + (b - y)^2 = (x - 1)^2 + y^2$, which in turn is equivalent to
%$2by = (a^2 + b^2 - 1) + (2 - 2a)x$,
%and
%$\vert \alpha - \lambda \vert < \vert \lambda - 1 \vert $ if and only if
%$2by > (a^2 + b^2 - 1) + (2 - 2a)x$.
%This shows that
%$T_{\lambda - \om(z)}$ is not Fredholm on the line defined by $2by = (a^2 + b^2 - 1) + (2 - 2a)x$. Since $\lambda = 1\in\sigma_\textup{p}(T_\om)$ for all $\alpha\in\BC, \alpha\not=1$, $\sigma(T_\om)$ is the closed half plane in $\BC$ generated by  $2by = (a^2 + b^2 - 1) + (2 - 2a)x$ that contains $1$,
%and by Theorem \ref{T:spectrum2} this is the image under $\om$ of the exterior of the open unit disc. For $\lambda$ on the line given by $2by = (a^2 + b^2 - 1) + (2 - 2a)x$ we have that
%$T_{\lambda-\om}$ is injective and has dense range, so that $\lambda\in\sigma_\textup{c}(T_\om)$. By Theorem \ref{T:spectrum2} $\sigma_\textup{r}(T_\om)=\emptyset$.
%\hfill$\Box$
\end{example}

%\newpage
\begin{example}\label{E:spectrum4a}
Let $\om(z) = \frac{1}{(z-1)^k}$ for some positive integer $k>1$. Then
\[
\si_\tu{p}(T_\om)=\si(T_\om)=\BC,\quad \si_{r}(T_\om)=\si_\tu{c}(T_\om)=\rho(T_\om)=\emptyset,
\]
and the essential spectrum is given by
\[
\si_\tu{ess}(T_\om)=\om(\BT)=\{(it-\half)^k \mid t\in\BR  \}.
\]

%then $\lambda\in\rho(T_\om)$ if and only if $p_{\lambda,\alpha}(z) = \lambda(z-1)^2 - \alpha$ has two roots in $\BD$. But $p_{\lambda,\alpha}(z) = 0$ if and only if $(z-1)^2 = \frac{\alpha}{\lambda}$ which never has both roots inside $\BD$. Thus $\sigma(T_\om) = \BC$ for all $\alpha\in\BC$. Moreover, $\sigma_\textup{ess}(T_\om)$ is a parabola in the complex plane for every $\alpha$, separating the region where the index of $\lambda-T_\om$  is one from the region where the index is two. In fact, if $\lambda$ is on the parabola then $\lambda\in\sigma_\textup{p}(T_\om)$, as $\lambda-\om$ has one root in $\overline{\BD}$. So $\sigma_\textup{p}=\mathbb{C}$, $\sigma_\textup{c}(T_\om)=\sigma_\textup{r}(T_\om)=\emptyset$.

For $k=2$ the situation is as in Figure 2; one can check that the curve $\om(\BT)$ is the parabola $\re(z)=\frac{1}{4}-\im(z)^2$. (Recall that different colors indicate different Fredholm index, as explained at the end of the introduction.)
\begin{figure}
\begin{center}
\includegraphics[height=4cm]{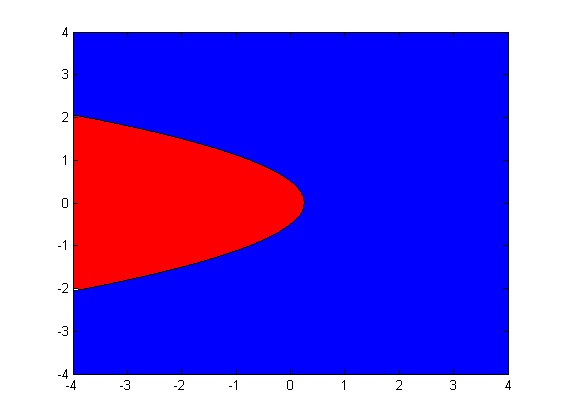}
\caption{Spectrum of $T_\om$ where $\om(z)=\frac{1}{(z-1)^2}$}
\end{center}
\end{figure}

To prove the statements, we start with the observation that for $\vert z\vert = 1$, $\frac{1}{z-1}$ is of the form $it-\frac{1}{2} , t\in\BR$. Thus for $z\in\BT$ with $\frac{1}{z-1}=it-\frac{1}{2}$ we have
\[
\om(z) = \frac{1}{(z-1)^k} = (z-1)^{-k} = (it -\half)^k.
\]
This proves the formula for $\si_\tu{ess}(T_\om)$. For $\la=re^{i\theta}\neq 0$ we have
\[
\la-\om(z)=\frac{\la(z-1)^{k}-1}{(z-1)^k}.
\]
Thus $\la-\om(z)=0$ if and only if $(z-1)^k=\la^{-1}$, i.e., $z=1+r^{-1/k}e^{i(\theta +2\pi l)/k}$ for $l=0,\ldots,k-1$. Thus the zeroes of $\la-\om$ are $k$ equally spaced points on the circle with center 1 and radius $r^{-1/k}$. Clearly, since $k>1$, not all zeroes can be inside $\overline{\BD}$, so $k_q> k_\la^{0}+k_\la^{-}$, and thus $\la\in\si_\tu{p}(T_\om)$. It follows directly from Theorem \ref{T:main2} that $0\in\si_\tu{p}(T_\om)$. Thus $\si_\tu{p}(T_\om)=\BC$, as claimed. The curve $\om(\BT)$ divides the plane into several regions on which the index is a positive constant integer, but the index may change between different regions.
\hfill $\Box$
\end{example}

\section{The essential spectrum need not be connected}\label{S:ExEssSpec}

For a continuous function $\omega$ on the unit circle it is obviously the case that the
curve $\om(\BT)$ is a connected and bounded curve in the complex plane, and hence the
essential spectrum of $T_\omega$ is connected in this case. It was proved by Widom \cite{W64}
that also for $\omega$ piecewise continuous the essential spectrum of $T_\omega$ is connected,
and it is the image of a curve related to $\om(\BT)$ (roughly speaking, filling the jumps with
line segments). Douglas \cite{D98} proved that even for $\omega\in L^\infty$ the essential
spectrum of $T_\omega$ as an operator on $H^2$ is connected.
In \cite{BS06} the question is raised whether or not
the essential spectrum of $T_\omega$ as an operator on $H^p$ is always connected when
$\om \in L^\infty$.

Returning to our case, where $\omega$ is a rational function possibly with poles on the unit circle,
clearly when $\omega$ does have poles on the unit
circle it is not a-priori necessary that $\si_\tu{ess}(T_\om)=\om(\BT)$ is connected. We shall present examples that show that indeed the essential spectrum need not be connected, in contrast with the case where $\omega\in L^\infty$.

Consider $\om=s/q\in\Rat(\BT)$ with $s,q\in\cP$ with real coefficients. In that case $\overline{\om(z)}=\om(\overline{z})$, so that the essential spectrum is symmetric with respect to the real axis. In particular, if $\om(\BT)\cap \BR=\emptyset$, then the essential spectrum is disconnected. The converse direction need not be true, since the essential spectrum can consist of several disconnected parts on the real axis, as the following example shows.

\begin{example}\label{E:disconR}
Consider $\om(z)=\frac{z}{z^2+1}$. Then
\[
\si_\tu{ess}(T_\om)=\om(\BT)=(-\infty,-1] \cup [1,\infty)=\si_\tu{c}(T_\om),\quad
\si_\tu{p}(T_\om)=\BC\backslash \om(\BT),
\]
and thus $\si_\tu{r}(T_\om)=\rho(T_\om)=\emptyset$. Further, for $\la\not\in\om(\BT)$ the Fredholm index is 1.\medskip

Indeed, note that for $z=e^{i\theta}\in\BT$ we have
\[
\om(z)=\frac{1}{z+z^{-1}}=\frac{1}{2\,\re(z)}=\frac{1}{2\cos(\theta)}\in\BR.
\]
Letting $\theta$ run from $0$ to $2\pi$, one finds that $\om(\BT)$ is equal to the union of $(-\infty,-1]$ and $[1,\infty)$, as claimed. Since $\om$ is strictly proper, $\si_\tu{r}(T_\om)=\emptyset$ by Theorem \ref{T:spectrum2}. Applying Theorem \ref{T:recall1} to $T_\om$ we obtain that $T_\om$ is Fredholm with index 1. Hence $T_\om$ is not injective, so that $0\in\si_\tu{p}(T_\om)$. However, since $\BC\backslash \om(\BT)$ is connected, it follows from Theorem \ref{T:main1} that the index of $T_{\la-\om}$ is equal to 1 on $\BC\backslash \om(\BT)$, so that $\BC\backslash \om(\BT)\subset\si_\tu{p}(T_\om)$. However, for $\la$ on $\om(\BT)$ the function $\la-\om$ has two zeroes on $\BT$ as well as two poles on $\BT$. It follows that $\om(\BT)=\si_\tu{c}(T_\om)$, which shows all the above formulas for the spectral parts hold.
\end{example}

As a second example we specify $q$ to be $z^2-1$ and determine a condition on $s$ that guarantees $\si_\tu{ess}(T_\om)=\om(\BT)$ in not connected.

\begin{example}
Consider $\om(z)=\frac{s(z)}{z^2-1}$ with $s\in\cP$ a polynomial with real coefficients. Then for $z\in\BT$ we have
\[
\om(z)=\frac{\overline{z}s(z)}{z-\overline{z}}
=\frac{\overline{z}s(z)}{-2i\,\im(z)}
=\frac{i\overline{z}s(z)}{2\,\im(z)},\quad \mbox{so that}\quad
\im(\om(z))=\frac{\re(\overline{z}s(z))}{2\,\im(z)}.
\]
Hence $\im(\om(z))=0$ if and only if $\re(\overline{z}s(z))=0$. Say $s(z)=\sum_{j=0}^k a_j z^j$. Then for $z\in\BT$ we have
\begin{align*}
\re(\overline{z}s(z)) & = \sum_{j=0}^k a_j \re(z^{j-1}).
\end{align*}
Since $|\re(z^j)|\leq 1$, we obtain that $|\re(\overline{z}s(z))|>0$ for all $z\in\BT$ in case $2|a_1|>\sum_{j=0}^k|a_j|$. Hence in that case $\om(\BT)\cap \BR=\emptyset$ and we find that the essential spectrum is disconnected in $\BC$.

We consider two concrete examples, where this criteria is satisfied.

Firstly, take  $\omega(z)=\frac{z^3+3z+1}{z^2-1}$.  Then
$$
\omega(e^{i\theta})= \frac{1}{2}(2\cos\theta -1) -\frac{i}{2}\frac{2(\cos\theta +1/4)^2+7/4}{\sin\theta},
$$
which is the curve given in Figure 3, that also shows the spectrum and resolvent as well as the essential spectrum.

\begin{figure}
\includegraphics[height=4cm]{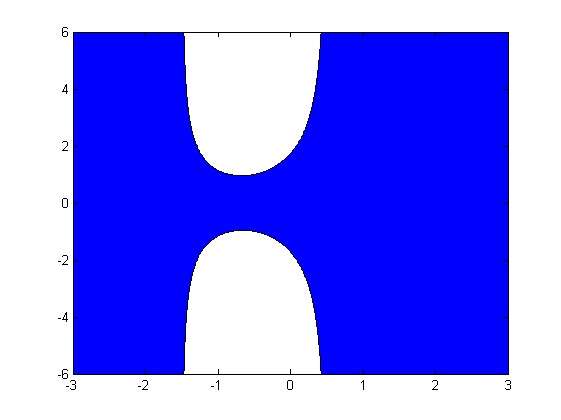}
\caption{Spectrum of $T_\omega$, where $\om(z)=\frac{z^3+3z+1}{z^2-1}$}
\end{figure}

Secondly, take $\omega(z)=\frac{z^4+3z+1}{z^2-1}$. Figure 4 shows the spectrum and resolvent and the essential spectrum. Observe that this is also a case where the resolvent is a bounded set.

\begin{figure}
\includegraphics[height=4cm]{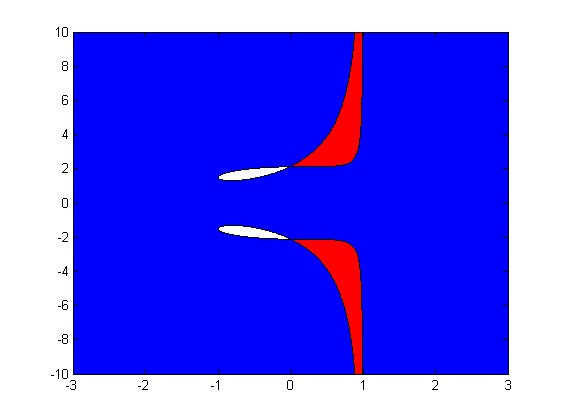}
\caption{Spectrum of $T_\omega$, where $\om(z)=\frac{z^4+3z+1}{z^2-1}$}
\end{figure}

\end{example}

\section{A parametric example}\label{S:Examples2}

In this section we take $\om_k(z) = \frac{z^k + \alpha}{(z - 1)^2}$ for $\alpha\in\BC, \al\neq -1$ and for various integers $k\geq 1$. Note that the case $k=0$ was dealt with in Example \ref{E:spectrum4a} (after scaling with the factor $1+\al$). The zeroes of $\la-\om$ are equal to the roots of
\[
p_{\lambda,\alpha,k}(z)=\lambda q(z)- s(z) = \lambda (z-1)^2 - (z^k + \alpha).
\]
Thus, $\la$ is in the resolvent set $\rho(T_{\om_k})$ whenever $p_{\lambda,\alpha,k}$ has at least two roots in $\BD$ and no roots on $\BT$. Note that Theorem \ref{T:spectrum2} applies in case $k=1,2$. We discuss the first of these two cases in detail, and then conclude with some figures that contain possible configurations of other cases.

%In the following examples, we will consider
%$\om_k(z) = \frac{z^k + \alpha}{(z - 1)^2}$ for $\alpha\in\BC, \al\neq -1$ and for $k=1,2$; the case $k=0$
%was dealt with in Example \ref{E:spectrum4a}.
%Note that for these $k$ the examples fall into the class that is covered by Theorem \ref{T:spectrum2}.
%Define the polynomial
%\[
%p_{\lambda,\alpha,k}(z) = \lambda (z-1)^2 - (z^k + \alpha)=\lambda q- s
%\]
%then $\lambda\in\rho(T_{\om_k})$ if and only if $p_{\lambda,\alpha,k}$ has two roots inside $\BD$. As we observed before the roots of $p_{\lambda,\alpha,k}$ are continuous functions of $\lambda$ and $\alpha$, and so the number of roots only changes if $\lambda$ crosses $\om_k(\BT)$.

\begin{example}\label{E:spectrum4b}
Let $\om(z)=\om_1(z) = \frac{z+\alpha}{(z-1)^2} $ for $\alpha\not = -1$. Then
\begin{equation}\label{EssSpecPara}
\si_\tu{ess}(T_\om)=\om(\BT)=\{(it-\half) + (1+\alpha)(it-\half)^2 \mid t\in\BR\}.
\end{equation}
Define the circle
\[
\BT(-\half,\half)=\{z\in\BC \mid |z+\half|=\half\},
\]
and write $\BD(-\half,\half)$ for the open disc formed by the interior of $\BT(-\half,\half)$ and $\BD^c(-\half,\half)$ for the open exterior of $\BT(-\half,\half)$.

For $\al\notin \BT(-\half,\half)$ the curve $\om(\BT)$ is equal to the parabola in $\BC$ given by
\begin{align*}
\om(\BT) &=\left\{ -(\al+1)(x(y)+i y) \mid y\in\BR \right\},\quad \mbox{ where } \\
x(y) &= \frac{|\al+1|^4}{(|\al|^2+\re(\al))^2}y^2+
\frac{(\re(\al)+1)|\al+1|^2\im(\al)}{(|\al|^2+\re(\al))^2}y+
\frac{|\al|^2(1-|\al|^2)}{(|\al|^2+\re(\al))^2},
\end{align*}
while for $\al\in \BT(-\half,\half)$ the curve $\om(\BT)$ becomes the half line given by
\[
\om(\BT)=\left\{-(\al+1)r - \frac{(\al+1)(1+2\overline{\al})}{4(1-|\al|^2)} \mid r\geq 0 \right\}.
\]
As $\om$ is strictly proper, we have $\si_\tu{r}(T_\om)=\emptyset$. For the remaining parts of the spectrum we consider three cases.
\begin{itemize}
  \item[(i)]
  For $\al\in \BD(-\half,\half)$ the points $-\half$ and $0$ are separated by the parabola $\om(\BT)$ and the connected component of $\BC\backslash \om(\BT)$ that contains $-\half$ is equal to $\rho(T_{\om})$, while the connected component that contains 0 is equal to $\si_\tu{p}(T_\om)$. Finally, $\si_\tu{ess}(T_\om)=\om(\BT)=\si_\tu{c}(T_\om)$.

  \item[(ii)]
  For $\al\in \BT(-\half,\half)$ we have
  \[
  \rho(T_\om)=\emptyset,\quad \si_\tu{c}(T_\om)=\om(\BT)=\si_\tu{ess}(T_\om),\quad \si_\tu{p}(T_\om)=\BC\backslash \om(\BT),
  \]
  and for each $\la\in \om(\BT)$, $\la-\om$ has two zeroes on $\BT$.

  \item[(iii)] For $\al\in \BD^c(-\half,\half)$ we have $\si_\tu{p}(T_\om)=\BC$, and hence $\rho(T_\om)=\si_\tu{c}(T_\om)=\emptyset$.

\end{itemize}

The proof of these statements will be separated into three steps.\smallskip

\paragraph{\it Step 1.}
We first determine the formula of $\om(\BT)$ and show this is a parabola. Note that
\[
\om(z) = \frac{z+\alpha}{(z-1)^2} = \frac{z-1}{(z-1)^2} + \frac{1+\alpha}{(z-1)^2} = \frac{1}{z-1} + (\alpha+1)\frac{1}{(z-1)^2}.
\]
Let $|z|=1$. Then $\frac{1}{z-1}$ is of the form $it-\half$ with $t\in\BR$. So $\om(\BT)$ is the curve
\begin{equation*}
\om(\BT)=\{(it-\half) + (\alpha+1)(it-\half)^2 \mid t\in\BR\}.
\end{equation*}
Thus \eqref{EssSpecPara} holds. Now observe that
\begin{align*}
& (it-\half) + (\alpha+1)(it-\half)^2=\\
&\qquad = -t^2(\alpha+1) + t(i - (\alpha+1)i) + (-\half + \mbox{$\frac{1}{4}$}(\alpha+1))\\
&\qquad  = -t^2(\alpha+1) + (-\alpha i)t + (-\mbox{$\frac{1}{4}$} + \mbox{$\frac{1}{4}$}\alpha)\\
&\qquad = \displaystyle -(\alpha+1)\left(t^2 + t\frac{\alpha i}{\alpha+1} - \frac{1}{4}\left(\frac{\alpha - 1}{\alpha + 1}\right )\right ).
\end{align*}
The prefactor $-(1+\alpha)$ acts as a rotation combined with a real scalar multiplication, so  $\om(\BT)$ is also given by
\begin{equation}\label{omTeq}
\om(\BT)=-(\al+1)\left\{t^2 + t\left(\frac{\alpha i}{\alpha+1}\right ) - \frac{1}{4}\left (\frac{\alpha-1 }{\alpha+1}\right ) \mid t\in\BR\right\}.
\end{equation}
Thus if the above curve is a parabola, so is $\om(\BT)$. Write
\begin{align*}
x(t) &= \re \left(t^2 + t\frac{\alpha i}{1+\alpha} - \frac{1}{4}\left(\frac{\alpha - 1}{\alpha + 1}\right )\right ),\\
y(t) &= \im \left(t^2 + t\frac{\alpha i}{1+\alpha} - \frac{1}{4}\left(\frac{\alpha - 1}{\alpha + 1}\right )\right ).
\end{align*}
Since
\[
\frac{\al i}{\al+1}=\frac{-\im(\al)+i(|\al|^2+\re(\al))}{|\al+1|^2}
\ands
\frac{\al-1}{\al+1}=\frac{(|\al|^2-1)+2i\im(\al)}{|\al+1|^2}
\]
we obtain that
\[
x(t) = t^2-\frac{\im(\al)}{|\al+1|^2}t-\frac{|\al|^2-1}{4|\al+1|^2},\quad
y(t) = \frac{|\al|^2+\re(\al)}{|\al+1|^2}t-\frac{\im(\al)}{2|\al+1|^2}.
\]

Note that $|\al+\half|^2=|\al|^2+\re(\al)+\frac{1}{4}$. Therefore, we have $|\al|^2+\re(\al)=0$ if and only if $|\al+\half|=\half$. Thus $|\al|^2+\re(\al)=0$ holds if and only if $\al$ is on the circle $\BT(-\half,\half)$.

In case $\al\notin \BT(-\half,\half)$, i.e., $|\al|^2+\re(\al)\neq 0$, we can express $t$ in terms of $y$, and feed this into the formula for $x$. One can then compute that
\[
x=\frac{|\al+1|^4}{(|\al|^2+\re(\al))^2}y^2+
\frac{(\re(\al)+1)|\al+1|^2\im(\al)}{(|\al|^2+\re(\al))^2}y+
\frac{|\al|^2(1-|\al|^2)}{(|\al|^2+\re(\al))^2}.
\]
Inserting this formula into \eqref{omTeq}, we obtain the formula for $\om(\BT)$ for the case where $\al\notin \BT(-\half,\half)$.

In case $\al\in \BT(-\half,\half)$, i.e., $|\al|^2+\re(\al)= 0$, we have
\[
|\al+1|^2=1-|\al|^2=1+\re(\al), \quad \im(\al)^2=|\al|^2(1-|\al|^2)
\]
and using these identities one can compute that
\[
y(t)=\frac{-2\im(\al)}{4(1-|\al|^2)}\ands
x(t)=\left(t-\frac{\im(\al)}{2(1-|\al|^2)}\right)^2+\frac{1+2\re(\al)}{4(1-|\al|^2)}.
\]
Thus $\{x(t)+iy(t) \mid t\in\BR\}$ determines a half line in $\BC$, parallel to the real axis and starting in $\frac{1+2\overline{\al}}{4(1-|\al|^2)}$ and moving in positive direction. It follows that $\om(\BT)$ is the half line
\[
\om(\BT)=\left\{-(\al+1)r - \frac{(\al+1)(1+2\overline{\al})}{4(1-|\al|^2)} \mid r\geq 0 \right\},
\]
as claimed.\medskip

\paragraph{\it Step 2.} Next we determine the various parts of the spectrum in $\BC\backslash \om(\BT)$. Since $\om$ is strictly proper, Theorem \ref{T:spectrum2} applies, and we know $\si_\tu{r}(T_\om)=\emptyset$ and $\si_\tu{p}=\om(\BC\backslash \overline{\BD})\cup \{0\}$.

For $k=1$, the polynomial $p_{\la,\al}(z)=p_{\la,\al,1}(z)=\la z^2 -(1+2\la)z+\la-\al$ has roots
\[
\frac{-(1+2\la)\pm \sqrt{1+4\la(1+\al)}}{2\la}.
\]

We consider three cases, depending on whether $\al$ is inside, on or outside the circle $\BT(-\half,\half)$.

Assume $\al\in\BD(-\half,\half)$. Then $\om(\BT)$ is a parabola in $\BC$. For $\la=-\half$ we find that $\la-\om$ has zeroes $\pm i\sqrt{1+2\al}$, which are both inside $\BD$, because of our assumption. Thus $-\half\in\rho(T_\om)$, so that $\rho(T_\om)\neq \emptyset$. Therefore the connected component of $\BC\backslash \om(\BT)$ that contains $-\half$ is contained in $\rho(T_\om)$, which must also contain $\om(\BD)$. Note that $0\in\om(\BT)$ if and only if $|\al|=1$. However, there is no intersection of the disc $\al\in\BD(-\half,\half)$ and the unit circle $\BT$. Thus 0 is in $\si_\tu{p}(T_\om)$, but not on $\om(\BT)$. Hence $0$ is contained in the connected component of $\BC\backslash \om(\BT)$ that does not contain $-\half$. This implies that the connected component containing $0$ is included in $\si_\tu{p}(T_\om)$. This proves our claims for the case $\al\in\BD(-\half,\half)$.

Now assume $\al\in\BT(-\half,\half)$. Then $\om(\BT)$ is a half line, and thus $\BC\backslash \om(\BT)$ consists of one connected component. Note that the intersection of the disc determined by $|\al+\half|<\half$ and the unit circle consists of $-1$ only. But $\al\neq -1$, so it again follows that $0\notin\om(\BT)$. Therefore the $\BC\backslash \om(\BT)=\si_\tu{p}(T_\om)$. Moreover, the reasoning in the previous case shows that $\la=-\half$ is in $\si_\tu{c}(T_{\om})$ since both zeroes of $-\half-\om$ are on $\BT$.

Finally, consider that case where $\al$ is in the exterior of $\BT(-\half,\half)$, i.e.,  $|\al+\half|>\half$. In this case, $|\al|=1$ is possible, so that $0\in\si_\tu{p}(T_\om)$ could be on $\om(\BT)$. We show that $\al=\om(0)\in\om(\BD)$ is in $\si_\tu{p}(T_\om)$. If $\al=0$, this is clearly the case. So assume $\al\neq0$. The zeroes of $\al-\om$ are then equal to $0$ and $\frac{1+2\al}{\al}$. Note that $|\frac{1+2\al}{\al}|> 1$ if and only if $|1+2\al|^2-|\al|^2>0$. Moreover, we have
\[
|1+2\al|^2-|\al|^2=3|\al|^2+4\re(\al)+1=3|\al+\mbox{$\frac{2}{3}$}|^2-\mbox{$\frac{1}{3}$}.
\]
Thus, the second zero of $\al-\om$ is outside $\overline{\BD}$ if and only if $|\al+\frac{2}{3}|^2>\frac{1}{9}$. Since the disc indicated by $|\al+\frac{2}{3}|\leq\frac{1}{3}$ is contained in the interior of $\BT(-\half,\half)$, it follows that for $\al$ satisfying $|\al+\half|>\half$ one zero of $\al-\om$ is outside $\overline{\BD}$, and thus $\om(0)=\al\in \si_\tu{p}(T_\om)$. Note that
\[
\BC=\om(\BC)=\om(\BD)\cup \om(\BT) \cup \om(\BC\backslash \overline{\BD}),
\]
and that $\om(\BD)$ and $\om(\BC\backslash \overline{\BD})$ are connected components, both contained in $\si_\tu{p}(T_\om)$. This shows that $\BC\backslash \om(\BT)$ is contained in $\si_\tu{p}(T_\om)$.\medskip

\paragraph{\it Step 3.} In the final part we prove the claim regarding the essential spectrum $\si_\tu{ess}(T_\om)=\om(\BT)$. Let $\la\in\om(\BT)$ and write $z_1$ and $z_2$ for the zeroes of $\la-\om$. One of the zeroes must be on $\BT$, say $|z_1|=1$. Then $\la\in\si_\tu{p}(\BT)$ if and only if $|z_1z_2|=|z_2|>1$. From the form of $p_{\la,\al}$ determined above we obtain that
\[
\la z^2-(1+2\la)z+\la-\al=\la(z-z_1)(z-z_2).
\]
Determining the constant term on the right hand sides shows that $\la z_1z_2=\la-\al$. Thus
\[
|z_2|=|z_1z_2|=\frac{|\la-\al|}{|\la|}.
\]
This shows that $\la\in\si_\tu{p}(T_\om)$ if and only if $|\la-\al| > |\la|$, i.e., $\lambda$ is in the half plane containing zero determined by the line through $\half\alpha$ perpendicular to the line segment from zero to $\alpha$.

Consider the line given by $|\la-\al| = |\la|$ and the parabola $\om(\BT)$, which is a half line in case $\al\in\BT(-\half,\half)$. We show that $\om(\BT)$ and the line intersect only for $\al\in\BT(-\half,\half)$, and that in the latter case $\om(\BT)$ is contained in the line. Hence for each value of $\al\neq -1$, the essential spectrum consists of either point spectrum or of continuous spectrum, and for $\al\in\BT(-\half,\half)$ both zeroes of $\la-\om$ are on $\BT$, so that $\om(\BT)$ is contained in $\si_\tu{c}(T_\om)$.

As observed in \eqref{EssSpecPara}, the parabola $\om(\BT)$ is given by the parametrization $(it-\half)^2(\alpha+1)+(it-\half)$ with $t\in\BR$, while the line is given by the parametrization
$\half\alpha +si\alpha$ with $s\in\BR$. Fix a $t\in\BR$ and assume the point on $\om(\BT)$ parameterized by $t$ intersects with the line, i.e., assume there exists a $s\in\BR$ such that:
$$
(it-\half)^2(\alpha+1)+(it-\half)=\half\alpha +si\alpha,
$$
Thus
$$
(-t^2-it+\mbox{$\frac{1}{4}$})(\alpha+1)+(it-\half)=\half\alpha +si\alpha,
$$
and rewrite this as
$$
i(-t(\alpha+1)+t-\alpha s)+((-t^2+\mbox{$\frac{1}{4}$})(\alpha+1)-\half -\half \alpha)=0,
$$
which yields
$$
-\alpha i(t+s)+(\alpha+1)(-t^2-\mbox{$\frac{1}{4}$})=0.
$$
Since $t^2+\mbox{$\frac{1}{4}$}>0$, this certainly cannot happen in case $\al=0$. So assume $\al\neq 0$.
Multiply both sides by $-\overline{\alpha}$ to arrive at
$$
|\alpha|^2i (t+s)+(|\alpha|^2+\overline{\alpha})(t^2+\mbox{$\frac{1}{4}$})=0.
$$
Separate the real and imaginary part to arrive at
\[
(|\alpha|^2+\re(\alpha))(t^2+\mbox{$\frac{1}{4}$})+ i(|\al|^2(t+s)-(t^2+\mbox{$\frac{1}{4}$})\im(\al))=0.
\]
Thus
\[
(|\alpha|^2+\re(\alpha))(t^2+\mbox{$\frac{1}{4}$})=0
\ands
|\al|^2(t+s)=(t^2+\mbox{$\frac{1}{4}$})\im(\al).
\]
Since $t^2+\mbox{$\frac{1}{4}$} >0$, the first identity yields $|\alpha|^2+\re(\alpha)=0$, which happens precisely when  $\al\in\BT(-\half,\half)$. Thus there cannot be an intersection when $\al\notin\BT(-\half,\half)$. On the other hand, for $\al\in\BT(-\half,\half)$ the first identity always holds, while there always exists an $s\in\BR$ that satisfies the second equation. Thus, in that case, for any $t\in\BR$, the point on $\om(\BT)$ parameterized by $t$ intersects the line, and thus $\om(\BT)$ must be contained in the line.

We conclude by showing that $\om(\BT)\subset \si_{\tu{p}}(T_\om)$ when $|\al +\half|>\half$ and that $\om(\BT)\subset \si_{\tu{c}}(T_\om)$ when $|\al +\half|<\half$. Recall that the two cases correspond to $|\al|^2+\re(\al)>0$ and $|\al|^2+\re(\al)<0$, respectively. To show that this is the case, we take the point on the parabola parameterized by $t=0$, i.e., take $\la=\frac{1}{4}(\al+1)-\half=\frac{1}{4}(\al-1)$. Then $\la-\al=-\frac{1}{4}(3\al+1)$. So
\[
|\la-\al|^2=\mbox{$\frac{1}{16}$}(9|\al|^2+6\re(\al)+1) \ands
|\la|^2=\mbox{$\frac{1}{16}$}(|\al^2|-2\re(\al)+1).
\]
It follows that $|\la-\al|>|\la|$ if and only if
\[
\mbox{$\frac{1}{16}$}(9|\al|^2+6\re(\al)+1)> |\la|^2=\mbox{$\frac{1}{16}$}(|\al^2|-2\re(\al)+1),
\]
or equivalently,
\[
8(|\al|^2+\re(\al))>0.
\]
This proves out claim for the case $|\la+\half|>\half$. The other claim follows by reversing the directions in the above inequalities.

Figure 5 presents some illustrations of the possible situations.
\begin{figure}
\includegraphics[width=12cm]{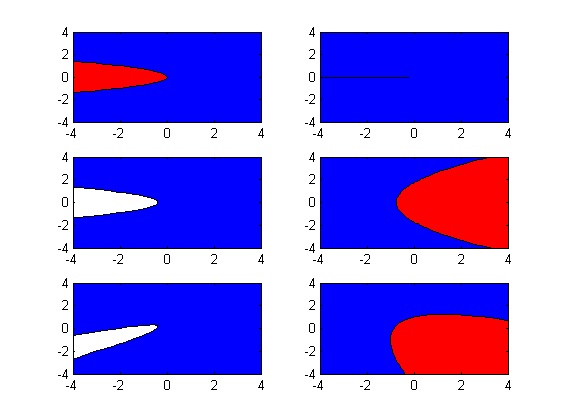}
\caption{Spectrum of $T_\omega$, where $\om(z)=\frac{z+\alpha}{(z-1)^2}$ for some values of
$\alpha$, with $\alpha = 1$, and $\alpha=0$ (top row left and right), $\alpha=1/2$ and $\alpha=-2$ (middle row left and right), $\alpha =-\frac{1}{2}+\frac{1}{4}i$ and $\alpha=-2+i$
(bottom row).}
\end{figure}
 \hfill$\Box$

\end{example}

The case $k=2$ can be dealt with using the same techniques, and very similar results are obtained in that case.

The next examples deal with other cases of $\om_k$, now with $k>2$.

\begin{example}\label{E:spectrum4d}
Let $\om = \frac{z^3 + \alpha}{(z-1)^2}$. Then
{\small
\[
\om(z)   = \frac{z^3 + \alpha}{(z-1)^2} =
(z-1) + 3 + \frac{3}{z-1} + \frac{1+\alpha}{(z-1)^2}.
\]
}
For $z\in\BT$, $\frac{1}{z-1}$ has the form $-\frac{1}{2} + ti, t\in\BR$ and so $\om(\BT)$ has the form
\[
\om(\BT)=\left\{
\frac{1}{-\frac{1}{2} + ti} + 3 + 3(-\frac{1}{2} + ti) + (1+\alpha)\left (- \frac{1}{2} + ti\right)^2,\mid t\in\BR\right\}.
\]
Also $\lambda - \om(z) = \frac{\lambda(z-1)^2 - z^3 - \alpha}{(z-1)^2}$ and so for invertibility we need the polynomial $p_{\lambda,\alpha}(z) = \lambda(z-1)^2 - z^3 - \alpha$ to have exactly two roots in $\BD$.  Since this is a polynomial of degree $3$ the number of roots inside $\BD$ can be zero, one, two or three, and the index of $\lambda-T_\om$ correspondingly can be two, one, zero or minus one. Examples are given in Figure 6.

\bigskip
\begin{figure}
\includegraphics[width=12cm]{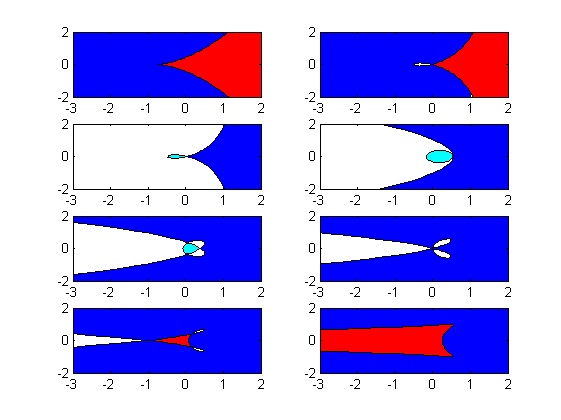}
\caption{Spectrum of $T_\om$ where $\om(z)=\frac{z^3+\alpha}{(z-1)^2}$ for several values of $\alpha$, with $\alpha$ being (left to right and top to bottom) respectively, $-2, -1.05, -0.95, 0.3, 0.7, 1, 1.3, 2$.
}
\end{figure}

\end{example}

\begin{example}\label{E:spectrum5d}
To get some idea of possible other configurations we present some examples with other values of $k$.

For $\om (z)= \frac{z^4 }{(z-1)^2}$ (so $k=4$ and $\alpha =0$) the essential spectrum of $T_\om$ is the curve in Figure 7, the white region is the resolvent set, and color coding for the Fredholm index is as earlier in the paper. For $\om (z)= \frac{z^6 + 1.7}{(z-1)^2}$ (so $k=6$ and $\alpha =1.7$) see Figure 8, and as a final example Figure 9 presents the essential spectrum and spectrum for
$\om(z)=\frac{z^7+1.1}{(z-1)^2}$ and $\om(z)=\frac{z^7+0.8}{(z-1)^2}$. In the latter figure
color coding is as follows: the Fredholm index is $-3$ in the yellow region, $-4$ in the green region and $-5$ in the black region.

\begin{figure}
\includegraphics[height=4cm]{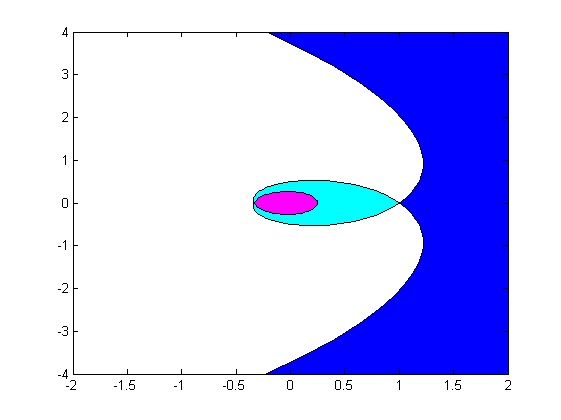}
\caption{The spectrum of $T_\om$, with $k=4$ and $\alpha=0$.}
\end{figure}

\begin{figure}
\includegraphics[height=4cm]{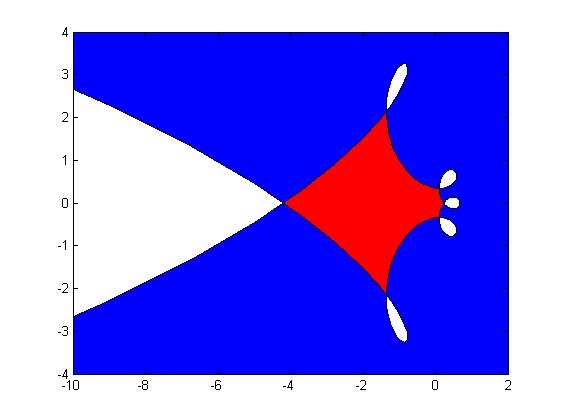}
\caption{The spectrum of $T_\om$ with $k=6$ and $\alpha=1.7$.}
\end{figure}

\begin{figure}
\includegraphics[height=4cm]{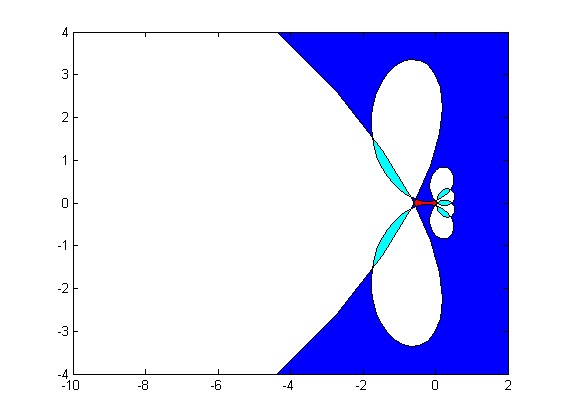}
\includegraphics[height=4cm]{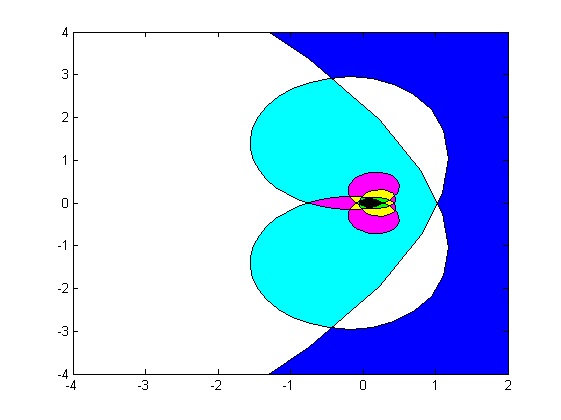}
\caption{The spectrum of $T_\om$ for $k=7$ and $\alpha=1.1$ (left) and $k=7$, $\alpha=0.8$ (right)}
\end{figure}

\end{example}

\paragraph{\bf Acknowledgement}
 The present work is based on research supported in part by the National Research Foundation of South Africa. Any opinion, finding and conclusion or recommendation expressed in this material is that of the authors and the NRF does not accept any liability in this regard.

\end{document}